\newif\if@restonecol
\newtheorem{prop}{Proposition}
\newtheorem{cor}{Corollary}
\DeclareMathAlphabet{\mathdj}{U}{msb}{m}{n}
\DeclareMathAlphabet{\mathbf}{OML}{cmm}{b}{it}
\DeclareMathAlphabet{\mathbfsl}{OT1}{cmr}{bx}{sl}
\newcommand{\A}{\ensuremath{\mathdj{A}}}
\newcommand{\R}{\ensuremath{\mathdj{R}}}
\newcommand{\mK}{\ensuremath{\mathcal{K}}}
\newcommand{\multlag}{ \ensuremath{\lambda}}
\newcommand{\Ks}{\ensuremath{\mathcal{K}}}
\newcommand{\Cs}{\ensuremath{\mathcal{C}}}
\newcommand{\iden}{\ensuremath{\mathbf{I}}}
\newcommand{\Id}{\iden}
\newcommand{\op}{\ensuremath{\mathbf{A}}}
\newcommand{\opk}{\ensuremath{\mathbf{A}^{(k)}}}
\newcommand{\precok}{\ensuremath{{\mathbf{M}^{(k)}}}}
\newcommand{\precoM}{\ensuremath{\mathbf{M}}}
\newcommand{\preco}{\ensuremath{\mathbf{M}^{-1}}}
\newcommand{\const}{\ensuremath{\mathbf{C}}}
\newcommand{\constD}{\ensuremath{\mathbf{D}}}
\newcommand{\constE}{\ensuremath{\mathbf{E}}}
\newcommand{\constk}{\ensuremath{{\mathbf{C}^{(k)}}}}
\newcommand{\proj}{\ensuremath{\mathbf{P}}}
\newcommand{\range}{\ensuremath{\operatorname{Range}}}
\newcommand{\Wdes}{\ensuremath{\mathbf{W}}}
\newcommand{\Zp}{\ensuremath{\mathbf{Z}}}
\newcommand{\Vritz}{\ensuremath{\mathbf{V}}}
\newcommand{\Yritz}{\ensuremath{\mathbf{Y}}}
\newcommand{\Hess}{\ensuremath{\mathbf{H}}}
\newcommand{\bTheta}{\ensuremath{\boldsymbol{\Theta}}}
\newcommand{\precoL}{\ensuremath{\mathbf{L}}}
\newcommand{\bQ}{\ensuremath{\mathbf{Q}}}
\title{Total and selective reuse of Krylov subspaces for the resolution of sequences of nonlinear structural problems}
\author{P. Gosselet, C. Rey, J. Pebrel\\ 
LMT Cachan, ENS Cachan/CNRS/UPMC/PRES UniverSud Paris\\  61 Avenue du Pr\'esident Wilson, 94235 Cachan, France}
\begin{document}

\maketitle




\begin{abstract}
This paper deals with the definition and optimization of augmentation spaces for faster convergence of the conjugate gradient method in the resolution of sequences of linear systems. Using advanced convergence results from the literature, we present a procedure based on a selection of relevant approximations of the eigenspaces for extracting, selecting and reusing information from the Krylov subspaces generated by previous solutions in order to accelerate the current iteration. Assessments of the method are proposed in the cases of both linear and nonlinear structural problems.

Keywords: Krylov solvers; multiresolution; model reduction.
\end{abstract}


\section{Introduction}
Accelerating the convergence of Krylov iterative solvers \cite{SAAD:2000:IMS} is an old issue which has returned to the spotlight because of the increasing number of applications for which these are preferred to direct solvers today. Traditional approaches aim at improving the condition number by using frameworks in which efficient preconditioners exist (e.g. domain decomposition methods \cite{KLAWONN:2001:FNN,GOSSELET.2007.1}), or for which good initialization vectors \cite{GOSSELET:2003:IEI}, relevant augmentation subspaces \cite{DOSTAL:1988:CGM,SAAD:1997:AAK,CHAPMAN:1996:DAK} or suitable block strategies (see \cite{Aliaga00alanczos-type} for a very general block-Lanczos algorithm) are available. For instance:
\begin{itemize}
  \item For 3D elasticity problems, domain decomposition methods come with ``physical'' augmentation associated with the global equilibrium of floating substructures (rigid body motions), which makes the methods scalable \cite{FARHAT:1994:ADV,MANDEL:1996:COEF}; for plate and shell problems, additional augmentation through ``corner modes''  \cite{FARHAT:1996:COR1,FARHAT:1996:COR2,LETALLEC:1997:SHELL} is required.
  \item For structures with repeated patterns, block strategies are possible \cite{GOSSELET.2009.1}.
  \item For restarted algorithms, one can use deflation or augmentation \cite{MORGAN:1995:Eigen,ERHEL:1995:RGM}, or block techniques \cite{baker:1608}.
  \item For problems with multiple right-hand sides, deflation \cite{FARHAT.1994.ESBI,SAAD:2000:DCG,ERHEL:2000:ACG} is a rather classical approach.
\end{itemize}
The problem with these techniques is that they require some \emph{a priori} information which is seldom available, except in specific cases.

Many recent works present theoretical and practical comparisons of the numerous algorithms which have been developed in connection with these ideas \cite{TANG:2007:TNC,SIMONCINI:2007:RCD}.

Multiresolution approaches form a general framework in which numerical information is available to accelerate the convergence of Krylov solvers. Multiresolution refers to situations in which the solution of a mechanical problem cannot be achieved through the resolution of a single linear system. For example, calculating the solution of a nonlinear or time-dependent problem or exploring a design of experiment during an optimization procedure requires the resolution of sequences of linear systems. Multiresolution is more general than using multiple right-hand sides because the matrices themselves are likely to change from one system to another (multiple right-hand and left-hand sides). Thus, the problem consists in solving a $k$-indexed family of large, sparse, linear $n\times n$ systems of the form:
\begin{equation}
  \op^{(k)} x^{(k)} =  b^{(k)}
\end{equation}
Although different, the systems are assumed to be similar to one another. This similarity can be defined in several ways: in terms of rank, by the fact that $\operatorname{rank}(\op^{(k)}-\op^{(k-1)})\ll \operatorname{rank}(\op^{(k)})$; or in a spectral sense by the fact that the eigenspaces remain stable from one system to another; or in terms of the Krylov subspaces generated \cite{CARPRAUX:1994:SKB}. The first case can be dealt with easily, even with direct solvers, by using the Sherman-Morrison formula; the second case requires augmentation strategies in order to eliminate the most penalizing part of the spectrum and improve the active condition number\footnote{``active'' referring to the part of the spectrum of the matrix which is solicited by the right-hand side.} \cite{Rey:1998:RRP,GOSSELET:2002:SRK,PARKS:2006:RKS,WANG:2007:LST}; and the last case calls for preconditioning techniques \cite{REY:1996:TAR,Risler:2000:IAA}.

While most of the studies of Krylov methods for multiresolution (often referred to as the recycling of Krylov subspaces) are set in the framework of GMRes/MinRes \cite{PARKS:2006:RKS,WANG:2007:LST}, we chose to work on the specific case of the resolution of symmetric, positive definite systems using conjugate gradients (CGs), in which the convergence is under control and related to easily calculated spectral properties \cite{VANDERSLUIS:1986:RCCG}.
In earlier works, the authors developed efficient preconditioners based on previous Krylov subspaces \cite{REY:1996:TAR,Risler:2000:IAA} which took advantage of the conjugation properties of CGs,
but did not extract the most interesting part of the information available in the Krylov subspaces, and they proposed augmentation techniques using Ritz vectors \cite{Rey:1998:RRP}.
Typically, these works were aimed at nonlinear mechanical systems solved by Newton-Raphson linearization and FETI or BDD domain decomposition \cite{GOSSELET:2002:DDM,lene:2001}. 

The recycling of Krylov subspaces can also be analyzed from the model reduction point of view. Since Krylov solvers satisfy Petrov-Galerkin conditions, they share many common points with strategies based on Karhunen-Loeve expansion \cite{meyer.2003.1,Ryckelynck.2006.1,Nouy.2007.1}. These similarities are well-known \cite{Freund2000395,Freund.2003.1,heres.2007.1}. But our objective is not to develop reduced models of mechanical systems in order to perform fast but coarse analyzes; it is to define, improve and reuse reduced models in order to carry out calculations both rapidly and accurately.

In this paper, we undertake a more in-depth investigation of augmentation using a selection of post-processed Ritz vectors. In Section 2, we begin with a detailed presentation of the theoretical framework of the augmented preconditioned conjugate gradient method; then, in Section 3, we propose a first reuse algorithm in a multiresolution framework (TRKS); in Section 4, we improve this algorithm by proposing a procedure for selecting the ``best'' Ritz vectors (SRKS and ``cluster''); finally, in Section 5, we propose an evaluation of the method in the case of nonlinear mechanics and parametric problems, using domain decomposition methods \cite{GOSSELET.2007.1} to define efficient preconditioners.

\section{The augmented preconditioned conjugate gradient method}\label{section:APCG}

\subsection{Algorithm and properties}
Let us consider the linear problem
\begin{equation}\label{pb1}
\op x = b,
\end{equation}
where $\op$ is an $n \times n$ symmetric positive definite matrix, and let us study the resolution of this system using the augmented preconditioned conjugate gradient algorithm. With $\precoM$ being the $n \times n$ symmetric positive definite matrix of the preconditioner, we introduce the following notations:
\begin{equation}
\begin{aligned}
i=0\ldots m &\qquad & \text{the iteration number}\\
x_i & & \text{the } i^{th} \text{ approximation}\\
r_i=b-Ax_i = A(x-x_i) & & \text{the } i^{th} \text{ residual}
\end{aligned}
\end{equation}
With no loss of generality, the presentation can be limited to the case of a zero initial guess $x_{00}=0$. (Otherwise, one can set $b \leftarrow b-\op x_{00}$.)

Let $\Cs$ be a subspace of $\mathbb{R}^n$ of dimension $n_c$, and let Matrix $\const=\left[c_1, \ldots, c_{n_c}\right]$ be a basis of $\Cs$. The search principle of the augmented left-preconditioned conjugate gradient is:
\begin{equation}\label{eq:krysol_principle}
\left\{\begin{array}{ll}
\textrm{find } &x_i \in \Ks_i(\precoM^{-1}\op,\Cs,\precoM^{-1}r_0)\\
\textrm{such that }&r_i \perp \Ks_i(\precoM^{-1}\op,\Cs,\precoM^{-1}r_0)
\end{array}\right.
\end{equation}
where $\Ks_i(\precoM^{-1}\op,\Cs,\precoM^{-1}r_0)$ is the augmented Krylov subspace associated with preconditioned operator $\precoM^{-1}\op$ and augmentation subspace $\Cs$:
\begin{equation}
\Ks_i(\precoM^{-1}\op,\Cs,\precoM^{-1}r_0) = \\
\operatorname{span}\left(\precoM^{-1}r_0,\ldots,(\precoM^{-1}\op)^{(i-1)}\precoM^{-1}r_0\right)\oplus\Cs
\end{equation}

A classical implementation relies on the definition of a convenient initialization and projector pair $(x_0$,$\proj)$:
\begin{equation}
x=x_0 + \proj y \quad
\left\{ \begin{array}{lll}
\const^Tr_0=0 &\Leftarrow &x_0=\const(\const^T\op \const)^{-1}\const^Tb \\
\const^T\op \proj=0 &\Leftarrow& \proj=\iden-\const(\const^T\op \const)^{-1}\const^T\op
\end{array}\right.
\end{equation}
One should note that since $\op \proj=\proj^T\op \proj=\proj^T\op$ augmentation preserves symmetry. One should also note that $\proj\const=0$. The system to be solved is:
\begin{equation}\label{sys1}
\op \proj y= (\proj^{T} \op \proj) y=r_0=\proj^{T}b
\end{equation}

The $\const$-augmented, $\precoM$-preconditioned conjugate gradient technique (APCG) implemented by projection is presented in Algorithm \ref{APCG}. (For the sake of simplicity, the methods will be described assuming exact arithmetic, even though they are compatible with more realistic full reorthogonalization \cite{LINGEN:2000:EGS}.)

\begin{algorithm2e}[th]\caption{APCG($\op, \precoM,\const$, $b$)}\label{APCG}
Calculate $\op \const$, $(\const^T \op \const)^{-1}$ \quad; \quad($\proj=\Id - \const\left(\const^T \op \const\right)^{-1}\const^T \op$)\;  %
$x_0 = \const(\const^T \op \const)^{-1}\const^T b$\;%
$r_0=b-\op x_0=\proj^T b$\;%
$z_0 = \proj \preco r_0$, $w_0=z_0$\;%
\For{$j=1,\ldots,m$}{%
  $\alpha_{j-1}=(r_{j-1},w_{j-1})/(\op w_{j-1},w_{j-1})$\\
  $x_{j}=x_{j-1}+\alpha_{j-1} w_{j-1}$\\
  $r_{j}=r_{j-1}-\alpha_{j-1} \op w_{j-1}$\\
  $z_{j} = \proj \preco r_{j+1}$\\
  $w_{j}=z_{j} - \beta_j w_{j-1}$\\
  $\beta_j=(\op w_{j-1},z_{j})/(w_{j-1},\op w_{j-1})$%
}%
\end{algorithm2e}

The following basic relations hold:
\begin{equation}\label{relation:acg}
\begin{aligned}
(r_i,z_j)&=0, \quad i \neq j\\
(w_i,\op w_j)&=0, \quad i \neq j
\end{aligned}
\end{equation}
With $\Wdes_i=\left[w_0,\ldots,w_{i-1}\right]$ and $\Zp_i=\left[z_0,\ldots,z_{i-1}\right]$ being two bases of $\Ks_i(\proj\precoM^{-1}\op,z_0)$, the projector enables the spaces to be divided orthogonally:
\begin{equation}\Ks_i(\precoM^{-1}\op,\Cs,\precoM^{-1}r_0) = \Ks_i(\proj\precoM^{-1}\op,z_0) \overset{\perp_A}{\oplus} \Cs\end{equation}

Of course, in the absence of optional constraints ($\const=0$, $\proj=\Id$), APCG reduces to standard preconditioned conjugate gradients PCG($\op$,$\precoM$, $b$); if, in addition, $\preco=\Id$, it becomes a standard conjugate gradient algorithm CG($\op$, $b$).

Let us recall a first result which was proven in \cite{DOSTAL:1988:CGM} for the case of non-preconditioned augmented conjugate gradients.
\begin{prop}\label{prop:acg} Let $\mathcal{V}=\range(\proj)$
\item[$\bullet$] APCG($\op, \Id,\const$, $b$) is equivalent to CG(${{\proj}}^{T} {\op} {\proj}_{|\op\mathcal{V}}$, $\proj^T b$) in the sense that both generate the same residuals. $x_i$, the $i^{th}$ APCG approximation, is connected to $y_i$, the $i^{th}$ CG approximation, by $x_i=x_0+\proj y_i$.
\item[$\bullet$] APCG($\op, \Id,\const$, $b$) does not break down; it converges, and its asymptotic convergence rate is governed by the condition number $\kappa\left( {{\proj}}^{T} {\op} {\proj}_{|{\op} {\mathcal{V}}} \right) \leqslant \kappa\left({\op} \right)$.%
\end{prop}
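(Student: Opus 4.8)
The plan is to realise the equivalence through the explicit change of variables $x = x_0 + \proj y$ and to propagate it through the recurrences of Algorithm~\ref{APCG} specialised to $\precoM = \Id$. The whole argument is driven by the identities already recorded above, namely $\op\proj = \proj^T\op\proj = \proj^T\op$, $\proj^2 = \proj$ and $\proj\const = 0$, which together state that $\proj$ is the $\op$-orthogonal projector onto $\mathcal{V} = \range(\proj)$ along $\Cs$. First I would observe that $r_0 = \proj^T b$ satisfies $\const^T r_0 = 0$, so that $r_0 \in \Cs^{\perp} = \op\mathcal{V} = \range(\proj^T\op\proj)$, and that $\hat{\op} := \proj^T\op\proj$ is symmetric positive definite on $\op\mathcal{V}$ (indeed $y^T\hat{\op}y = \|\proj y\|_{\op}^2$ vanishes for $y \in \Cs^{\perp}$ only when $y = 0$) with $\op\mathcal{V}$ being $\hat{\op}$-invariant. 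Hence CG($\hat{\op}_{|\op\mathcal{V}}$, $\proj^T b$) is well posed.

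I would then prove by induction on $j$ that, writing $(y_j, s_j, p_j)$ for the iterate, residual and descent direction produced by this CG, the APCG quantities satisfy $x_j = x_0 + \proj y_j$, $r_j = s_j$ and $w_j = \proj p_j$. The base case uses $y_0 = 0$, $s_0 = \proj^T b = r_0$ and $w_0 = z_0 = \proj r_0 = \proj p_0$. For the induction step I would insert $w_{j-1} = \proj p_{j-1}$ into the step scalars and collapse them with $\op\proj = \proj^T\op\proj$: this gives $(\op w_{j-1}, w_{j-1}) = (\hat{\op}p_{j-1}, p_{j-1})$ and, using $s_{j-1} \in \range(\proj^T)$ so that $\proj^T s_{j-1} = s_{j-1}$, also $(r_{j-1}, w_{j-1}) = (s_{j-1}, p_{j-1})$, identifying $\alpha_{j-1}$ and $\beta_j$ with their CG counterparts. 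The residual recurrence $r_j = r_{j-1} - \alpha_{j-1}\op w_{j-1} = s_{j-1} - \alpha_{j-1}\proj^T\op\proj p_{j-1} = s_j$ closes because $\op\proj = \proj^T\op\proj$, and $w_j = z_j - \beta_j w_{j-1} = \proj r_j - \beta_j w_{j-1} = \proj(s_j - \beta_j p_{j-1}) = \proj p_j$. The equality of residuals is the asserted equivalence and $x_i = x_0 + \proj y_i$ is the stated correspondence, which proves the first bullet.

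The first part of the second bullet is then immediate: the whole CG process lives in the $\hat{\op}$-invariant subspace $\op\mathcal{V}$, on which $\hat{\op}$ is SPD, so the denominators $(\hat{\op}p_{j-1}, p_{j-1})$ cannot vanish before convergence; there is no breakdown, the method terminates in at most $\dim(\op\mathcal{V})$ steps, and the classical energy-norm estimate yields an asymptotic rate governed by $\kappa(\hat{\op}_{|\op\mathcal{V}})$.

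The substantive step, and the one I expect to be the main obstacle, is the inequality $\kappa(\hat{\op}_{|\op\mathcal{V}}) \leqslant \kappa(\op)$, which amounts to locating the nonzero eigenvalues $\mu_k$ of $\proj^T\op\proj$ inside the spectral interval of $\op$. Using $\proj^T = \op\proj\op^{-1}$ one rewrites $\hat{\op} = \op\proj$; a nonzero eigenpair $(\mu, v)$ then has $v \in \op\mathcal{V}$, and writing $v = \op w$ with $w \in \mathcal{V}$ turns the eigenequation into $\proj\op w = \mu w$. Thus the $\mu_k$ are exactly the eigenvalues of the $\op$-self-adjoint compression $\proj\op_{|\mathcal{V}}$ of $\op$ to $\mathcal{V}$ in the $\op$-inner product. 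The Poincar\'e separation (Cauchy interlacing) theorem for compressions of a self-adjoint operator then gives $\lambda_{\min}(\op) \leqslant \mu_k \leqslant \lambda_{\max}(\op)$ for every $k$, whence $\kappa(\hat{\op}_{|\op\mathcal{V}}) = \mu_{\max}/\mu_{\min} \leqslant \lambda_{\max}(\op)/\lambda_{\min}(\op) = \kappa(\op)$. Identifying the right compression and working consistently in the $\op$-geometry rather than the Euclidean one is where the care is needed.
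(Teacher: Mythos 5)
Your proof is correct, but there is no internal proof to compare it against: the paper does not prove Proposition~\ref{prop:acg} at all; it recalls it as a known result from \cite{DOSTAL:1988:CGM}. Judged as a self-contained substitute, your argument holds up. The change-of-variables induction ($x_j=x_0+\proj y_j$, $r_j=s_j$, $w_j=\proj p_j$), driven by $\op\proj=\proj^T\op\proj=\proj^T\op$, $\proj^2=\proj$ and $\range(\proj^T)=\op\mathcal{V}=\Cs^{\perp}$, is exactly the mechanism the paper does spell out when proving the preconditioning equivalence of Proposition~\ref{prop:pcg}, so that bullet is entirely in the paper's spirit (you assert rather than compute the identification of $\beta_j$, but it is the same one-line manipulation as for $\alpha_{j-1}$). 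The substantive content is the spectral bound, and your instinct about where the danger lies is right: a naive Euclidean Rayleigh-quotient bound on $\proj^T\op\proj$ fails because $\proj$ is an oblique projector, so $\|\proj v\|_2$ may exceed $\|v\|_2$ and neither extreme eigenvalue can be bounded that way. Your remedy is sound: $\proj$ is the $\op$-orthogonal projector onto $\mathcal{V}$ (its kernel is $\Cs$ and $\mathcal{V}\perp_{\op}\Cs$), the similar operator $\proj\op_{|\mathcal{V}}$ is self-adjoint in the $\op$-inner product since $\langle\proj\op u,v\rangle_{\op}=u^T\op^2 v=\langle u,\proj\op v\rangle_{\op}$ for $u,v\in\mathcal{V}$, and Poincar\'e/Cauchy interlacing in that geometry (equivalently, restriction of the Rayleigh quotient $w\mapsto(w^T\op^2 w)/(w^T\op w)$, whose range over $\mathbb{R}^n$ is $[\lambda_{\min}(\op),\lambda_{\max}(\op)]$) localizes the nonzero eigenvalues $\mu_k$. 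The paper's citation buys economy; your proof buys self-containedness, and in fact delivers the stronger interlacing statement rather than just the condition-number inequality.
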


Consequently, augmentation strategies never decrease the asymptotic convergence rate. The following corollary is straightforward:
\begin{cor}\label{prop:acg_e}
Let $\mathbf{D}=\left[ d_1, \ldots, d_{m_d}\right]$ be a set of $m_d$ linearly independent vectors such that $\constE=\left[\const, \mathbf{D} \right]$ is a full column rank matrix. Let $\proj_{\constE}=\iden-\constE(\constE^T\op \constE)^{-1}\constE^T\op$, and let $\mathcal{V}_{\constE}$ be the range of $\proj_{\constE}$.
\item[$\bullet$] APCG($\op, \Id, \constE$, $b$) is equivalent to APCG($\proj^T \op \proj$, $\Id$, $\mathbf{D}$, $\proj^T b$) in the sense that both generate the same residual. $x^{\constE}_i$, the $i^{th}$ approximation of APCG($\op, \Id, \constE$, $b$), is connected to $x^\constD_i$, the $i^{th}$ approximation of APCG($\proj^T \op \proj$, $\Id$, $\mathbf{D}$, $\proj^T b$), by $x^\constE_i=x_0+\proj x^\constD_i$.
\item[$\bullet$] The asymptotic convergence rate is governed by $\kappa\left( {{\proj}_{\constE}}^{T} {\op} {\proj_{\constE}}_{|{\op} {\mathcal{V}_{\constE}}} \right) \leqslant \kappa\left({{\proj}}^{T} {\op} {\proj}_{|{\op} {\mathcal{V}}} \right)\leqslant \kappa\left({\op}\right)$.
\end{cor}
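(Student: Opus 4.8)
The plan is to reduce the statement to a single algebraic identity relating the composite projector $\proj_{\constE}$ to the two projectors acting successively, and then to invoke Proposition \ref{prop:acg} twice. Write $\pppreco=\proj^{T}\op\proj$ and $\tilde{b}=\proj^{T}b$, so that APCG($\proj^{T}\op\proj$, $\Id$, $\constD$, $\proj^{T}b$) is an unpreconditioned augmented CG run driven by its own projector $\tilde{\proj}=\iden-\constD(\constD^{T}\pppreco\constD)^{-1}\constD^{T}\pppreco$. By Proposition \ref{prop:acg}, the left-hand run is equivalent to CG on $\proj_{\constE}^{T}\op\proj_{\constE}$ with right-hand side $\proj_{\constE}^{T}b$, while the right-hand run is equivalent to CG on $\tilde{\proj}^{T}\pppreco\tilde{\proj}$ with right-hand side $\tilde{\proj}^{T}\tilde{b}$. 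Hence it suffices to show that these two reduced CG problems coincide, which follows once I prove the nested-projector identity
\begin{equation*}
\proj_{\constE}=\proj\,\tilde{\proj}.
\end{equation*}

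The first task is to check that $\tilde{\proj}$ is well defined, i.e.\ that $\constD^{T}\pppreco\constD=(\proj\constD)^{T}\op(\proj\constD)$ is invertible; this holds because $\proj\constD$ has full column rank, a consequence of $\constE=[\const,\constD]$ having full column rank together with $\proj\const=0$ and $\range(\proj)$ being the $\op$-orthogonal complement of $\range(\const)$. Next I identify $\constD^{T}\pppreco\constD=\constD^{T}\op\proj\constD$ as the Schur complement of the block $\const^{T}\op\const$ in $\constE^{T}\op\constE$, using the relation $\op\proj=\proj^{T}\op\proj=\proj^{T}\op$ recalled in the text. The cleanest way to conclude is then to observe that $\proj\tilde{\proj}$ is \emph{the} $\op$-orthogonal projector onto $\range(\constE)^{\perp_{\op}}$: a short computation shows it is $\op$-self-adjoint (because $\op\proj\tilde{\proj}=\op\proj-\op(\proj\constD)(\constD^{T}\pppreco\constD)^{-1}(\proj\constD)^{T}\op$ is symmetric) and idempotent, that it annihilates $\const$ (since $\pppreco\const=0$ gives $\tilde{\proj}\const=\const$ and then $\proj\const=0$) and $\constD$ (since $\tilde{\proj}\constD=0$ by construction), and that its range is $\op$-orthogonal to $\range(\constE)$. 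Since an $\op$-orthogonal projector is determined by its range, $\proj\tilde{\proj}=\proj_{\constE}$.

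With the identity in hand, the first bullet is immediate: $\proj_{\constE}^{T}\op\proj_{\constE}=(\proj\tilde{\proj})^{T}\op(\proj\tilde{\proj})=\tilde{\proj}^{T}\pppreco\tilde{\proj}$ and $\proj_{\constE}^{T}b=\tilde{\proj}^{T}\proj^{T}b=\tilde{\proj}^{T}\tilde{b}$, so the two reduced CG problems are literally the same and therefore generate the same residuals. The correspondence $x^{\constE}_{i}=x_{0}+\proj x^{\constD}_{i}$ then follows by composing the two affine changes of variable $x=x^{\constE}_{0}+\proj_{\constE}y$ and $x^{\constD}=x^{\constD}_{0}+\tilde{\proj}y$ supplied by Proposition \ref{prop:acg} (the CG iterate $y$ being common to both); the only extra point to verify is that the composite initialization $x_{0}+\proj\,\constD(\constD^{T}\pppreco\constD)^{-1}\constD^{T}\tilde{b}$ equals the one-shot initialization $x^{\constE}_{0}=\constE(\constE^{T}\op\constE)^{-1}\constE^{T}b$, which is exactly the block-elimination (Schur-complement) formula for the augmented normal equations.

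Finally, the condition-number chain follows by applying the monotonicity part of Proposition \ref{prop:acg} twice. Its second bullet, applied to $\op$ augmented by $\const$, gives the right-hand inequality $\kappa(\proj^{T}\op{\proj}_{|\op\mathcal{V}})\leqslant\kappa(\op)$. Applying the same bullet to the symmetric positive definite operator $\pppreco$ restricted to $\mathcal{V}=\range(\proj)$, now augmented by $\constD$, gives $\kappa(\tilde{\proj}^{T}\pppreco{\tilde{\proj}}_{|\pppreco\tilde{\mathcal{V}}})\leqslant\kappa(\pppreco_{|\mathcal{V}})=\kappa(\proj^{T}\op{\proj}_{|\op\mathcal{V}})$, and by the identity $\tilde{\proj}^{T}\pppreco\tilde{\proj}=\proj_{\constE}^{T}\op\proj_{\constE}$ the left-hand term is $\kappa(\proj_{\constE}^{T}\op{\proj_{\constE}}_{|\op\mathcal{V}_{\constE}})$, which yields the remaining inequality. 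I expect the main obstacle to be the projector identity $\proj_{\constE}=\proj\tilde{\proj}$ — specifically keeping the Schur-complement bookkeeping straight and establishing well-posedness of $\tilde{\proj}$ — since both bullets are direct consequences of it together with Proposition \ref{prop:acg}.
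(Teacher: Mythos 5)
The paper offers no written proof of this corollary (it is simply declared ``straightforward'' after Proposition~\ref{prop:acg}), so the benchmark is the intended double application of that proposition --- which is exactly your strategy. Your handling of the first bullet is correct and essentially complete: the nested-projector identity $\proj_{\constE}=\proj\tilde{\proj}$ is true, each of your sub-claims checks out (full column rank of $\proj\constD$, hence invertibility of $\constD^{T}\pppreco\constD$; symmetry of $\op\proj\tilde{\proj}$; idempotency; annihilation of $\const$ and $\constD$; uniqueness of the $\op$-orthogonal projector with a given range), and the block-elimination identification of the two initializations is also right.

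The gap is in the second bullet. The pivot quantity $\kappa(\pppreco_{|\mathcal{V}})$ is not well defined: $\mathcal{V}=\range(\proj)$ is \emph{not} an invariant subspace of $\pppreco=\proj^{T}\op\proj$, since for $v\in\mathcal{V}$ one has $\pppreco v=\proj^{T}\op v\in\range(\proj^{T})=\op\mathcal{V}\neq\mathcal{V}$ ($\proj$ is an oblique, $\op$-orthogonal projector, not a Euclidean one). If one instead reads $\pppreco_{|\mathcal{V}}$ as the Euclidean compression of $\pppreco$ onto $\mathcal{V}$, the asserted equality $\kappa(\pppreco_{|\mathcal{V}})=\kappa(\proj^{T}\op\proj_{|\op\mathcal{V}})$ is false in general: for $v\in\mathcal{V}$, $v^{T}\pppreco v=v^{T}\op v$, so that compression has the spectrum of $\op$ compressed onto $\ker(\const^{T}\op)$, which differs from the nonzero spectrum of $\op\proj$ (take $\op=\operatorname{diag}(1,4)$, $\const=(1,1)^{T}$: the nonzero eigenvalue of $\op\proj$ is $8/5$, the compression gives $20/17$). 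A related defect is that Proposition~\ref{prop:acg} assumes an SPD operator, whereas $\pppreco$ is singular and the augmentation $\constD$ does not lie in $\range(\pppreco)$, so it cannot be invoked verbatim. Both defects are repairable along your lines: work on the invariant subspace $\op\mathcal{V}=\range(\pppreco)=\range(\const)^{\perp}$, on which $\pppreco$ is positive definite and the condition-number equality becomes tautological ($\pppreco$ and $\proj^{T}\op\proj$ are the same matrix), and replace $\constD$ by its orthogonal projection onto $\op\mathcal{V}$ --- this leaves $\tilde{\proj}^{T}\pppreco\tilde{\proj}$ unchanged because $\ker(\pppreco)=(\op\mathcal{V})^{\perp}=\range(\const)$ --- before applying Proposition~\ref{prop:acg} on that subspace. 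Alternatively, and more directly: writing $\op\proj_{\constE}=\op^{1/2}\Pi_{\constE}\op^{1/2}$, where $\Pi_{\constE}$ is the Euclidean orthogonal projector onto $\left(\op^{1/2}\range(\constE)\right)^{\perp}$, shows that the nonzero spectra of $\op\proj_{\constE}$ and $\op\proj$ are spectra of compressions of $\op$ onto \emph{nested} subspaces, and Courant--Fischer then yields the whole chain $\kappa\left(\proj_{\constE}^{T}\op{\proj_{\constE}}_{|\op\mathcal{V}_{\constE}}\right)\leqslant\kappa\left(\proj^{T}\op\proj_{|\op\mathcal{V}}\right)\leqslant\kappa\left(\op\right)$ in one stroke.
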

In conclusion, an increase in the size of the augmentation can only improve the asymptotic rate of convergence. (In the worst case, it leaves it unchanged.)

Now let us focus on the effect of preconditioning. Since $\precoM$ is a symmetric positive definite matrix, it can be factorized in Cholesky form $\precoM=\precoL \precoL^{T}$ (where $\precoL$ denotes a lower triangular matrix with positive diagonal coefficients). Let us introduce the notation:
\begin{equation}
\begin{array}{c}
\hat{\op}=\precoL^{-1}\op \precoL^{-T} \quad; \quad
\hat{b}=\precoL^{-1} b \quad; \quad \hat{x}=\precoL^T x\\
\hat{\const}=\precoL^{T}\const \quad; \quad \hat{\proj}=\Id-\hat{\const}(\hat{\const}^T\hat{\op} \hat{\const})^{-1}\hat{\const}^T\hat{\op}\\
\end{array}
\end{equation}
Then, the following equivalence between preconditioned and non-preconditioned augmented conjugate gradients holds:
\begin{prop}\label{prop:pcg}
APCG($\op, \precoM,\const$, $b$) is equivalent to APCG($\hat{\op}$, $\Id$, $\hat{\const}$, $\hat{b}$) with $\hat{r}=\precoL^{-1}r=\hat{z}=\precoL^{T}z$, $\hat{w}=\precoL^{T}w$, $\hat{\alpha}=\alpha$ and $\hat{\beta}=\beta$. Its asymptotic convergence rate is governed by
$\kappa\left( {\hat{\proj}}^{T} \hat{\op} \hat{\proj}_{|\hat{\op}
\hat{\mathcal{V}}} \right) \leqslant \kappa\left(\hat{\op} \right)$.
\end{prop}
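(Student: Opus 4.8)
The plan is to exhibit the linear change of variables $\hat{x}=\precoL^{T}x$ as a bijection that maps the defining search principle of APCG($\op,\precoM,\const,b$) exactly onto that of APCG($\hat{\op},\Id,\hat{\const},\hat{b}$), and then to read off the stated correspondences between the algorithmic quantities as a consequence of uniqueness.

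First I would establish the single algebraic identity on which everything rests,
\begin{equation}
\hat{\proj}=\precoL^{T}\proj\precoL^{-T}.
\end{equation}
This follows by substituting $\const=\precoL^{-T}\hat{\const}$ into $\proj=\Id-\const(\const^{T}\op\const)^{-1}\const^{T}\op$ and recognizing that $\const^{T}\op\const=\hat{\const}^{T}\hat{\op}\hat{\const}$ and $\const^{T}\op\precoL^{-T}=\hat{\const}^{T}\hat{\op}$; the two flanking factors $\precoL^{T}$ and $\precoL^{-T}$ then convert the original projector into $\hat{\proj}$ term by term. From this one gets $\hat{\mathcal{V}}=\range(\hat{\proj})=\precoL^{T}\mathcal{V}$, and, since $\hat{\op}\hat{x}=\precoL^{-1}\op x$ and $\hat{b}=\precoL^{-1}b$, that $\op x=b$ is carried to $\hat{\op}\hat{x}=\hat{b}$.

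Next I would verify that $\precoL^{T}$ transports the augmented Krylov space verbatim. Using $\precoM^{-1}\op\,\precoL^{-T}=\precoL^{-T}\hat{\op}$ and $\precoM^{-1}r_0=\precoL^{-T}\hat{r}_0$, one shows $\precoL^{T}(\precoM^{-1}\op)^{k}\precoM^{-1}r_0=\hat{\op}^{k}\hat{r}_0$, so that $\precoL^{T}\Ks_i(\precoM^{-1}\op,\Cs,\precoM^{-1}r_0)=\Ks_i(\hat{\op},\hat{\Cs},\hat{r}_0)$ with $\hat{\Cs}=\precoL^{T}\Cs=\range(\hat{\const})$. Since $\precoL^{T}$ is invertible, the Euclidean orthogonality $r_i\perp\Ks_i$ is equivalent to $(\precoL^{-1}r_i)^{T}(\precoL^{T}v)=0$ for all $v$, i.e.\ to $\hat{r}_i\perp\hat{\Ks}_i$ with $\hat{r}_i=\precoL^{-1}r_i$. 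Thus the pair (membership, orthogonality) defining $x_i$ in \eqref{eq:krysol_principle} is equivalent to the same pair defining $\hat{x}_i$; as both problems are well posed and have a unique solution (by positive-definiteness), $\hat{x}_i=\precoL^{T}x_i$ and $\hat{r}_i=\precoL^{-1}r_i$ hold for every $i$. Reading the remaining quantities off the algorithm then gives $\hat{z}=\precoL^{T}z$ (via the projector identity applied to $\hat{z}_i=\hat{\proj}\hat{r}_i=\precoL^{T}\proj\precoM^{-1}r_i=\precoL^{T}z_i$), $\hat{w}=\precoL^{T}w$, and, since the mixed factors cancel in the defining quotients, $\hat{\alpha}=\alpha$ and $\hat{\beta}=\beta$.

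Finally, the convergence statement is immediate: APCG($\hat{\op},\Id,\hat{\const},\hat{b}$) is an unpreconditioned augmented conjugate gradient, so Proposition \ref{prop:acg} applies directly in the hat variables and yields the governing bound $\kappa(\hat{\proj}^{T}\hat{\op}\hat{\proj}_{|\hat{\op}\hat{\mathcal{V}}})\leqslant\kappa(\hat{\op})$. I expect the only delicate point to be bookkeeping rather than ideas: one must check that every occurrence of $\precoL^{-1}$ and $\precoL^{T}$ pairs off correctly across the inner products and the $z_j$/$w_j$ recurrences, and in particular confirm that $\hat{\proj}=\precoL^{T}\proj\precoL^{-T}$ (equivalently $\precoL^{T}\proj\precoM^{-1}=\hat{\proj}\precoL^{-1}$) is exactly what turns the preconditioned projected residual $z$ into the unpreconditioned projected residual $\hat{z}$. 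An alternative, purely algorithmic route would be a line-by-line induction over Algorithm \ref{APCG}; it proves the same correspondences but is more tedious and rests on the identical algebraic identity.
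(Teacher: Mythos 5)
Your proof is correct, but it takes a genuinely different route from the paper's. The paper argues purely at the level of Algorithm \ref{APCG}: starting from the same key identity $\hat{\proj}=\precoL^{T}\proj\precoL^{-T}$, it verifies the initialization ($\hat{x}_{0}=\precoL^{T}x_{0}$, $\hat{r}_{0}=\precoL^{-1}r_{0}$, $\hat{w}_{0}=\precoL^{T}w_{0}$) and then propagates $\hat{\alpha}_{j-1}=\alpha_{j-1}$, $\hat{r}_{j}=\precoL^{-1}r_{j}$, $\hat{\beta}_{j}=\beta_{j}$, $\hat{w}_{j}=\precoL^{T}w_{j}$ through the recurrences by induction --- precisely the ``purely algorithmic route'' you mention as a tedious alternative at the end of your proposal. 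You instead transport the defining search principle \eqref{eq:krysol_principle}: $\precoL^{T}$ maps the augmented Krylov subspace onto its hatted counterpart, the Euclidean orthogonality conditions correspond under the transformation, and uniqueness of the Galerkin solution gives $\hat{x}_{i}=\precoL^{T}x_{i}$ and $\hat{r}_{i}=\precoL^{-1}r_{i}$ for all $i$ at once. Both proofs hinge on the same projector identity and both conclude by invoking Proposition \ref{prop:acg}. What your route buys is conceptual clarity and implementation independence (the equivalence holds for any algorithm realizing the search principle); what it costs is that you tacitly use the fact that Algorithm \ref{APCG} does realize \eqref{eq:krysol_principle}, and you still need a short induction on the $w$-recurrence (which you acknowledge) to pin down $\hat{w}=\precoL^{T}w$ and $\hat{\beta}=\beta$, so the paper's induction is not fully avoided, only repackaged. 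One caveat on completeness: the statement's chain $\hat{r}=\precoL^{-1}r=\hat{z}=\precoL^{T}z$ also asserts the middle equality $\hat{r}=\hat{z}$; your argument (rightly) establishes $\hat{r}=\precoL^{-1}r$ and $\hat{z}=\precoL^{T}z$ separately but never that middle equality, which amounts to $\const^{T}\op\precoM^{-1}r_{j}=0$ and is automatic only in the non-augmented case $\proj=\Id$ --- but this is a sloppiness in the statement itself (the paper's own proof asserts it equally without justification), not a flaw in your approach.
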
%
\begin{proof} Since $\hat{\proj}=\precoL^{T} \proj \precoL^{-T}$, we obtain directly $\hat{x_{0}}=\hat{\const}(\hat{\const}^T \hat{\op} \hat{\const})^{-1}\hat{\const}^T \hat{b}=\precoL^T x_0$, $\hat{r_{0}}=\hat{b}-\hat{\op}\hat{x_{0}}=\precoL^{-1} r_0=\hat{z_{0}}=\precoL^T z_0$ and $\hat{w_{0}}=\precoL^T w_0$. By induction, it follows that $\hat{\alpha}_{j-1}=(\hat{r}_{j-1},\hat{z}_{j-1})/(\hat{\op} \hat{w}_{j-1},\hat{w}_{j-1})={\alpha}_{j-1}$, $\hat{r_{j}}=\precoL^{-1} r_{j}$, $\hat{\beta}_{j}=(\hat{\op}\hat{w}_{j-1},\hat{z}_{j})/(\hat{\op} \hat{w}_{j-1},\hat{w}_{j-1})={\beta}_{j}$ and $\hat{w_{j}}=\precoL^T w_j$.
Proposition \ref{prop:acg} provides the inequality concerning the asymptotic convergence rate.
\end{proof}
Putting these propositions together, APCG($\op, \precoM,\const$, $b$) is equivalent to CG($\precoL^{-1}\proj^T \op \proj \precoL^{-T}$, $\precoL^{-1}\proj^Tb$).
%
All these results lead us to propose an efficient augmentation by analogy with an equivalent, simpler system solved by classical conjugate gradients.

\subsection{Interpretation and choice of the augmentation}

From a ``constraint'' point of view, the projection guarantees the $\const$-orthogonality of the residual throughout the iterations ($\const^T r_j=0$). For example, in the FETI domain decomposition method, the residual is the displacement jump between the subdomains; in the case of shell and plate problems, matrix $\const$ is introduced to enforce the continuity of the displacement at the corner points \cite{FARHAT:2000:FETINL}. In the BDD domain decomposition method, matrix $\const$ is associated with the rigid body motions of floating substructures and, therefore, local Neumann problems in the preconditioner are always well-posed \cite{MANDEL:1993:BAL}; for shell and plate problems, the matrix is enriched by corner mode corrections \cite{LETALLEC:1997:SHELL}. In both cases, matrix $(\const^T\op \const)^{-1}$, called a coarse grid matrix, plays a crucial role in the scalability of these methods.

From a ``spectral'' point of view, augmentation can be used to decrease the active condition number (``active'' referring to eigenelements solicited by the right-hand side) and, thus, improve the asymptotic convergence rate. This is called a deflation strategy \cite{ERHEL:2000:ACG,CHAPMAN:1996:DAK}, which boils down to building matrix $\const$ by using (approximate) eigenvectors associated with the lowest eigenvalues. Obviously, when $\const$ consists of the $n_c$ eigenvectors associated with the lowest eigenvalues $\left(\lambda_1\leqslant\ldots\leqslant\lambda_{n_c}\leqslant\ldots\leqslant\lambda_n\right)$, the condition number decreases strictly: $\kappa\left(\proj^T\op \proj_{\op \mathcal{V}}\right)=\frac{\lambda_n}{\lambda_{n_c}}<\frac{\lambda_n}{\lambda_{1}}$.

From a ``model reduction'' point of view, subspace $\Cs$ represents a ``macro'' (or coarse) space in which the macro part of the solution is calculated directly during the initialization while the ``micro'' part of the solution, when required, is obtained during the iterations.

\subsection{Estimation of computation costs}
With regard to the numerical cost of augmentation, the main operations for the construction of the projector are: (i) the block product $\op \const$ (and assembly with neighbors for domain decomposition methods), (ii) the block dot-product $(\const^T \op \const)$ (plus an all-to-all sum for domain decomposition methods), and (iii) the factorization of the fully-populated coarse matrix $(\const^T \op \const)$. Then, the application of the projector consists simply of (i) one block dot-product $((\op\const)^Tx)$ (plus an all-to-all exchange), (ii) the resolution of the coarse problem, and (iii) the matrix-vector product $(\const\alpha)$.

Thus, provided that the number of columns of matrix $\const$ is small, the main cost is related to the calculation of $\op \const$. One must bear in mind that block operations (on ``multivectors'') are comparatively much faster than single vector operations, especially when the matrices are sparse (because data fetching is factorized). In a domain decomposition context, product~$\op \const$ corresponds to the resolution of Dirichlet or Neumann problems in substructures, which makes the simultaneous treatment of many columns very efficient (and minimizes the number of exchanges). One must also remember that a conjugate gradient iteration involves a preconditioning step which may be expensive. (The cost is comparable to that of an operator product in optimal domain decomposition methods.) Thus, the additional cost of augmentation relative to the cost of one iteration depends on many parameters (the size of the problem, the number of augmentation vectors, the number of subdomains, the preconditioner chosen...). Typically, in the examples presented in this paper, we found that, using an optimal preconditioner, the CPU cost of between 4 and 7 augmentation vectors (depending on the hardware configuration) cost no more than one CG iteration.

A question which is not addressed in this paper is the verification of the full-rank property of matrix $\const$, which affects the quality of the factorization of matrix $(\const^T \op \const)$. Strategies to correct a dependence among the columns of matrix $\const$ due to inexact arithmetic can be found in \cite{Aliaga00alanczos-type}.

\section{Total reuse of Krylov subspaces}
In this section, we show how it is possible to define efficient augmentation strategies in a multiresolution context. Let us consider the sequence of linear systems:
\begin{equation}
\opk x^{(k)}=b^{(k)} \quad, \quad k=1, \ldots p
\end{equation}
where $A^{(k)}$ is an $n \times n$ symmetric positive definite matrix and $b^{(k)}$ is the right-hand side. Each linear system is solved using an augmented preconditioned conjugate gradient algorithm APCG($\opk$, $\precok, \const^{(k)}$, $b^{(k)}$). Let $m^{(k)}$ be the number of iterations which is necessary to reach convergence, and let
\begin{equation}
\Wdes^{(k)}_m=\left[w_0^{(k)}, \ldots,  w_{m^{(k)}-1}^{(k)} \right]
\end{equation}
be a basis of the associated Krylov subspace.

As explained in the previous section, augmentation never increases the condition number which governs the asymptotic convergence rate. More precisely, the presence of active eigenvectors of the current preconditioned problem in $\const^{(k)}$ may increase the efficiency of the iterative solver significantly. Classical strategies can be used in the case of invariant preconditioned operators ($\op^{(k)}=\op$, $\precoM^{(k)}=\precoM$) and multiple right-hand sides.

It is more difficult to define efficient strategies in the general case of varying operators with no information available on their evolution. A simple and natural idea is to reuse previous Krylov subspaces. A first algorithm which reuses all the previous Krylov subspaces is Total Reuse of Krylov Subspaces (TRKS) (Algorithm \ref{TRKS}), which needs only a few comments:
\begin{itemize}
\item Since (according to \eqref{relation:acg}) ${\const^{(k)}}^{T}\op^{(k)}\Wdes^{(k)}_m=0$, the vectors of the concatenated matrix $\const^{(k+1)}$ are linearly independent. Therefore, APCG($\op^{(k+1)}$, $\precoM^{(k+1)},
\const^{(k+1)}$, $b^{(k+1)}$) does not break down and converges.
\item The previous Krylov subspaces are fully reused through concatenation without post-processing; the only downside is that the memory requirements increase due to the need to save the Krylov subspaces.
\item If the number of columns of matrix $\constk$ becomes too large, the method may become computationally inefficient, even though the number of iterations decreases considerably. Nevertheless, TRKS probably leads to the best reduction in the number of iterations achievable by reusing Krylov subspaces. Therefore, it can be used as a reference in terms of the reduction of the number of iterations for any other algorithm based on a reuse of Krylov subspaces.
\item One possible way to reduce the cost of TRKS without reducing the size of $\const$ consists in using approximate solvers, as in the IRKS strategy \cite{Risler:2000:IAA}.
\end{itemize}

\begin{algorithm2e}[ht]\caption{TRKS-APCG}\label{TRKS}
Initialize $\const^{(0)}=\const_0$ (an $n\times m_0$ full-rank matrix)\;%
\For{$k=0,\ldots,p-1$}{%
Solve $\op^{(k)}x^{(k)} = b^{(k)}$ \\ $\;\;\;$ with APCG($\opk$, $\precok, \constk$, $b^{(k)}$)\;%
Define $\Wdes^{(k)}_m=\left[\hdots, w_j^{(k)} ,\hdots\right]_{0\leqslant j< m^{(k)}}$\;%
Concatenate: $\const^{(k+1)}=\left[\constk,\Wdes^{(k)}_m \right]$}%
\end{algorithm2e}

In order to reduce the cost associated with the total reuse of Krylov subspaces, we propose to work on extracted sub-subspaces, an operation often referred to as the recycling of Krylov subspaces. The objective is to retain the smallest number of independent vectors which achieve the greatest decrease in the number of iterations. Clearly, the most effective approach would be to calculate approximate eigenvectors from the previous Krylov subspaces for the current operator. However, because of the variability of the operators, the extraction of such information would be extremely time consuming and would affect the global efficiency. Conversely, approximate eigenvectors of previous problems can be calculated from the associated Krylov subspaces at nearly no cost. In the following section, we describe an efficient algorithm for the extraction of such approximation vectors along with a simple selection procedure to recycle only a few of these vectors. Of course, the performance of our method depends on the stability of the eigenspaces from one system to another. This topic, especially concerning the lower part of the spectrum, is discussed in \cite{kilmer2006}.

\section{Selective recycling of Krylov subspaces}

The standard convergence of conjugate gradients corresponds to an asymptotic convergence rate. Using this property to predict the number of iterations $n_{_{\epsilon}}$ which is required to reach an accuracy level $\epsilon_{_{cg}}$ leads to a huge overestimation. Indeed, one has:
\begin{equation}\label{eq:res-conv-classique}\begin{aligned}
&\frac{\Vert x_{_{i}}-x \Vert_{_{\A}}}{\Vert x_{_{0}}-x \Vert_{_{\A}}} \leqslant 2  (\sigma_{_{1,n}})^i \leqslant \epsilon_{_{cg}} \quad \Rightarrow \quad
i \geqslant n_{_{\epsilon}} = \frac{ \ln( \epsilon_{_{cg}}/2)}{\ln(\sigma_{_{1,n}})}\\
&\mbox{ with } \sigma_{_{r,s}} = \frac{\sqrt{\kappa_{_{r,s}}} - 1}{\sqrt{\kappa_{_{r,s}}} + 1} \mbox{ and } \kappa_{_{r,s}}= \frac{\lambda_{_{r}}}{\lambda_{_{s}}} \end{aligned}
\end{equation}

This result alone cannot explain the improvement in the convergence rate observed during the iteration process. This superconvergence phenomenon can be explained by a study of the convergence of Ritz values \cite{MORGAN:1995:Eigen} which enables one to define an instantaneous convergence rate \cite{VANDERSLUIS:1986:RCCG}. This explanation can be improved by a study of the influence of the distribution of the eigenvalues \cite{NOTAY:1993:CRCGRE,AXELSSON:1986:RCPCG}.

The objective of recycling Krylov subspaces is to find the best augmentation space in order to trigger superconvergence quickly. This section is organized as follows: we start with a review of Ritz eigenelement analysis and continue with a brief presentation of the improved convergence results; these results lead to a number of selection strategies, which will be assessed in Section \ref{sec:assess}.

\subsection{Ritz analysis: theory and practical calculation}\label{subs:ritz}

For $0\leqslant i<m$, Ritz vectors $(\hat{y}_m^i)$ and values $(\theta_m^i)$ are approximations of the eigenvectors and eigenvalues of the symmetric positive definite matrix $\hat{\op}$; their definition is similar to that of the iterates in the conjugate gradient algorithm \eqref{eq:krysol_principle}
\begin{equation}\label{eq:kryeig_principle}
\left\{\begin{array}{ll}
\text{find }& (\hat{y}_m^i,\theta_m^i) \in \Ks_m(\hat{\op},\hat{v}_0)\times\R\\
\text{such that }& \hat{\op} y_m^i - \theta_m^i y_m^i \perp \Ks_m(\hat{\op},\hat{v}_0)
\end{array}\right.
\end{equation}
The symmetric Lanczos algorithm \cite{SAAD:1992:NMLEP} enables one to build a particular orthonormal basis of $\Ks_m(\hat{\op},\hat{v}_0)$, denoted $\Vritz_m$. Then, the search principle becomes:
\begin{equation}
  y_m^i=\hat{\Vritz}_m q_m^i,\quad  \hat{\Vritz}_m^T\hat{\op}\hat{\Vritz}_m q_m^i = \theta_m^i q_m^i
\end{equation}
The Lanczos basis $\hat{\Vritz}_m$ makes the Hessenberg matrix $\hat{\Hess}_m=\hat{\Vritz}_m^T\hat{\op}\hat{\Vritz}_m$ symmetrical and tridiagonal. $\hat{\Vritz}_m$ and $\hat{\Hess}_m$ can be recovered directly from the conjugate gradient coefficients \cite{SAAD:2000:IMS}:
\begin{equation}\begin{aligned}
\left\{\begin{array}{l}
 \hat{\Vritz}_m = \left(\hdots,(-1)^j\frac{\hat{r}_j}{\|\hat{r}_j\|} ,\hdots\right)_{0\leqslant j< m} \\
 \hat{\Hess}_m=\operatorname{tridiag}(\eta_{j-1},\delta_j,\eta_j)_{0\leqslant
j< m}
\end{array}\right.\\
\text{with }\delta_0 = \frac{1}{\alpha_0},\ \delta_j =
\frac{1}{\alpha_j}+\frac{\beta_{j-1}}{\alpha_{j-1}},\
\eta_j=\frac{\sqrt{\beta_{j}}}{\alpha_j}
\end{aligned}
\end{equation}
Since matrix $\hat{\Hess}_m$ is symmetrical and tridiagonal, its eigenelements $(\theta^m_j,q^m_j)_{1\leqslant j\leqslant m}$ can be calculated easily, for example using a Lapack procedure. Let us define $\bTheta_m=\operatorname{diag}(\theta^1_m \leqslant \ldots\leqslant \theta_m^m)$ and $\bQ_m=\left[q_m^1,\ldots, q_m^m\right]$ such that $\hat{\Hess}_m=\bQ_m \bTheta_m {\bQ_m}^T$. $\bTheta_m$ and $\hat{\Yritz}_m= \hat{\Vritz}_m \bQ_m$ are the Ritz values and associated Ritz vectors, which are approximations of the eigenelements of operator $\hat{\op}$ and satisfy:
\begin{equation*} \hat{\Yritz}_m^T \hat{\op}\hat{\Yritz}_m = \bTheta_m \qquad\text{and}\qquad \hat{\Yritz}_m^T\hat{\Yritz}_m =  \Id_m\end{equation*}

We presented Ritz analysis for the equivalent symmetric system described previously because symmetry simplifies the calculation of eigenelements, but the analysis can be transferred back to the left-preconditioned system using the following transformation rules:
\begin{equation}\label{eq:vritz:2}
\begin{aligned}
    \Vritz_m &= \precoL^{-T}\hat{\Vritz}_m = \left[\ldots, (-1)^{j}\frac{{z}_j}{({r}_j,z_j)^{1/2}}, \ldots\right]\\
    \Hess_m &= \hat{\Hess}_m = \Vritz_m^T \op \Vritz_m\\
    \Yritz_m &= \precoL^{-T}\hat{\Yritz}_m=\Vritz_m\bQ_m
\end{aligned}
\end{equation}
The Ritz vectors are the solution of a generalized eigenproblem and satisfy the following orthogonality properties:
\begin{equation}
  \Yritz_m^T \op \Yritz_m = \bTheta_m \qquad\text{and}\qquad \Yritz_m^T \precoM \Yritz_m = \Id_m
\end{equation}

One can show that when $m$ increases the Ritz values converge toward the eigenvalues of $\hat{\op}$, and that the convergence is either from above or from below depending on their rank \cite{VANDERSLUIS:1986:RCCG,JIA:2004:CRV}:
\begin{equation} \theta^{{1}}_{m} \geqslant \theta^{{1}}_{m-1}  \geqslant \theta^{{2}}_{m} \geqslant \ldots \geqslant \theta^{{m-1}}_{m}  \geqslant \theta^{{m-1}}_
{m-1} \geqslant \theta^{{m}}_{m}
\end{equation}
In addition, in the case of clearly distinct eigenvalues, the convergence of a Ritz value results in the convergence of the associated Ritz vector.

\subsection{Relation between the convergence of conjugate gradients and the convergence of the Ritz values}\label{subs:ba}

In \cite{VANDERSLUIS:1986:RCCG}, the superconvergence phenomenon is explained by the convergence of the Ritz values through the definition, at each iteration, of a instantaneous convergence rate associated with the part of the spectrum that is not yet approximated correctly by the Ritz values: at a given conjugate gradient iteration, one can find a deflated system (with some of its extreme eigenvalues removed) with similar behavior. Let $\left[\lambda_{_{l}} , .., \lambda_{_{r}}  \right]$ be the spectrum of the deflated operator. The equivalent convergence rate is:
\begin{equation}\label{eq:cvcgritz} \Vert x-x_{_{i+1}} \Vert_{_{\A}}  \leqslant \; F_{_{i,l,r}} \; 2 \; \sigma_{_{l,r}}
\Vert x-x_{_{i}} \Vert_{_{\A}}  \end{equation}
where $F_{_{i,l,r}}$ quantifies the convergence of the $l$ smallest and $r$ largest Ritz values to the extreme eigenvalues:
\begin{equation*} \begin{aligned} F_{_{i,l,r}} &=  \max_{l'>l } J^{(i)}_{_{l,l'}} \;  \max_{r' \geqslant r} L^{(i)}_{_{r,r'}}\\J^{(i)}_{_{l,l'}} &= \prod_{_{j=1}}^{l}
\left| 1 - \frac{\lambda_{_{l'}}}{ \lambda_{_{j}}}\right|  \;
\left| 1 - \frac{\lambda_{_{l'}}}{\theta_{j}^{i}}\right|^{-1} \\
L^{(i)}_{_{r,r'}}  &= \prod_{_{j=1}}^{r}
 \left|   1 -  \frac{\lambda_{_{n-r^{'}}}}{\lambda_{_{n+1-j}}} \right| \;
 \left|  1 -  \frac{\lambda_{_{n-r^{'}}}}{\theta_{i+1-j}^{i}} \right|^{-1}\end{aligned}\end{equation*}
Since this result holds for every pair $(l,r)$, the effective convergence rate at Iteration $i$ corresponds to the pair $(l,r)$ which minimizes $\sigma_{_{i,l,r}} =  F_{_{i,l,r}} \sigma_{_{l,r}}$.

Then, after some iterations, the superconvergent conjugate gradient algorithm behaves very much like a conjugate gradient algorithm augmented by the extreme eigenvectors which are associated with the converged Ritz values.
In a multiresolution context, provided the linear systems have similar spectral properties, the Ritz vectors associated with the converged Ritz values obtained for one system should define a viable augmentation space for the subsequent resolutions.

\subsection{Effect of the distribution of the eigenvalues}\label{subs:evdba}

The effect of the distribution of the eigenvalues on the convergence of conjugate gradients was studied in \cite{NOTAY:1993:CRCGRE,AXELSSON:1986:RCPCG}. The results take into account the fact that preconditioning often leads to clustered eigenvalues as opposed to uniformly distributed eigenvalues, as can be seen in Figure~\ref{fig:eigen-distrib}.

In addition to other results, the authors showed that if a spectrum consists of $p$ isolated eigenvalues in the high part of the spectrum, $p$ isolated eigenvalues in the low part of the spectrum and $n-2p$ uniformly distributed central eigenvalues, then the conjugate gradient convergence takes the form:
\begin{equation}
n_{_{\epsilon}} \geqslant \tilde{n}_{_{\epsilon}}  =  2p +\\
 \mathrm{int} \; \left( \frac{ \ln \; (\epsilon_{_{cg}} /2 )}{ \ln \; \sigma_{_{p+1,n-p}} }
- \frac{  \sum_{i=1}^{p} \; \ln  \left(\frac{\lambda_{_{n-p+i}}}{4\lambda_{_{i}}}\left(1 - \frac{\lambda_{_{i}}}{\lambda_{_{n-p+i}}}  \right) \right)}{\ln \; \sigma_{_{p+1,n-p}} } \right)
\label{eq:conv-cg-petitesgrandesvp}
\end{equation}
The convergence rate is approximately equal to the classical convergence rate for the central part, plus one iteration per higher eigenvalue and a little more than one iteration per lower eigenvalue. These results can be combined with the work by Jiao \cite{JIA:2004:CRV,JIA:2004:CHRV} on the convergence of Ritz values. In general, since the method is related to the power iteration method, a correct approximation by the Ritz values is obtained first for the highest eigenvalues, then for the lowest part of the spectrum, resulting in superconvergence (which is governed by the asymptotic convergence rate of the reduced spectrum).

\subsection{Selection procedures}\label{subs:sel-proc}

The results of Section~\ref{subs:ba} lead to a first proposal of a selection procedure for converged Ritz vectors: convergence is identified by the stagnation of the Ritz values; if the conjugate gradient algorithm converges at iteration $m$, the Ritz values are calculated for the previous two states $\bTheta_m$ and $\bTheta_{m-1}$.
 Once ranked, the $m$ most recent Ritz values $\bTheta_m$ are compared to the $m-1$ previous values according to the following criteria:
\begin{equation}
\left\{\begin{aligned}
&\theta_{m}^j \text{ has converged if }  \frac{\vert \theta_{m}^j-\theta_{m-1}^j \vert}{\vert \theta_{m}^j \vert} \leqslant \varepsilon,\qquad 1 \leqslant j\leqslant m\!-\!1 \\
&\theta_{m}^{m-j} \text{ has converged if }  \frac{\vert \theta_{m}^{m-j}-\theta_{m-1}^{m-1-j} \vert}{\vert \theta_{m}^{m-j} \vert} \leqslant \varepsilon,\qquad 0 \leqslant j\leqslant m-2
\end{aligned}\right.
\end{equation}
where $\varepsilon$ is a user parameter which is easy to adjust since the criterion is generally either very high (before the convergence of the Ritz value) or very small (after convergence). Figure \ref{fig:eigen-distrib} illustrates that property with the simple example of the operator associated with the decomposition of a linear elastic cube into ten subdomains; in that case, the higher half of the spectrum has converged.
\begin{figure}[ht]
\begin{center}
\includegraphics[width=0.6\linewidth]{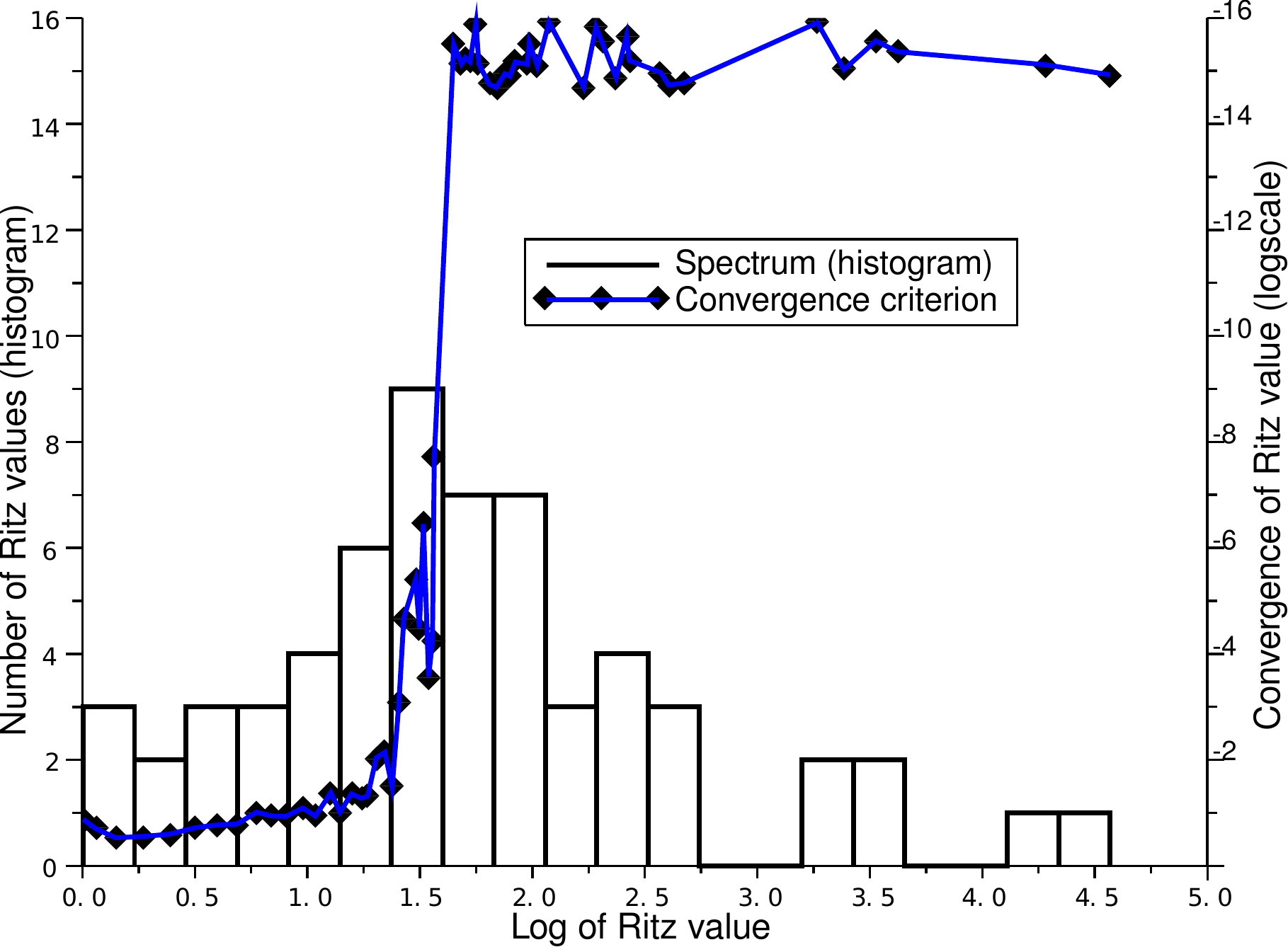}
\end{center}
\caption{Ritz spectrum and convergence of the Ritz values}
\label{fig:eigen-distrib}
\end{figure}

The principle of the selective recycling of Krylov subspaces (SRKS-APCG) is described in Algorithm~\ref{SRKS}. Basically, in addition to the memory required by APCG, the SRKS-APCG algorithm requires storage for $m$ $n$-vectors $(z_j)_{j=1,m}$. One should note that the selected vectors are normalized by the square root of the associated Ritz value in order to improve the condition number of the coarse matrix. (If operator $\op$ remained constant, matrix $(\const^T \op\const)$ would be the identity matrix.)

\begin{algorithm2e}[h!t]\caption{SRKS-APCG}\label{SRKS}
Initialize $C^{(0)}=\const_0$ (full column rank matrix)\;
\For{$k=0,\ldots,p-1$}{%
$\bullet$ Solve $\op^{(k)}x^{(k)} = b^{(k)}$ with APCG($\opk, \precok, \constk, b^{(k)}$)\;

$\bullet$ Define $ {\Vritz}_m=\left[\hdots, (-1)^{j}\frac{{z}_j}{({r}_j,z_j)^{1/2}} ,\hdots\right]_{0\leqslant j< m}$\;

$\bullet$ Define $ {\Hess}_m=\operatorname{tridiag}(\eta_{j-1},\delta_j,\eta_j)_{0\leqslant j< m}$ \, 
$ \delta_0 = \frac{1}{\alpha_0} \; ,  \; \delta_j =  \frac{1}{\alpha_j} + \frac{\beta_{j-1}}{\alpha_{j-1}} \; , \;  \eta_j=\frac{\sqrt{\beta_{j}}}{\alpha_j}$\;

$\bullet$ Compute eigenelements $ (\bQ_m,\bTheta_m)$ of ${\Hess}_m$ $(\theta_m^1\geqslant \ldots \geqslant \theta^m_m)$\;

$\bullet$ Compute $\Yritz_m = \Vritz_m\bQ_m=\left[y_m^1, \ldots , y_m^m \right]$\;

$\bullet$ Extract ${\Hess}_{m-1}=\operatorname{tridiag}(\eta_{j-1},\delta_j,\eta_j)_{0\leqslant j< m-1}$ \;

$\bullet$ Compute eigenvalues $(\theta_{m-1}^j)$ of ${\Hess}_{m-1}$ \;

\For{$j=1,\ldots,m-1$}{%
$\const=\left[\const, \frac{y_m^j}{\sqrt{|\theta_{m}^j|}} \right]$ if $|\theta_{m}^j-\theta_{m-1}^j| \leqslant \varepsilon |\theta_{m}^j| $\;
$\const=\left[\const, \frac{y_m^{j+1}}{\sqrt{|\theta_{m}^{j+1}|}} \right]$ if $|\theta_{m}^{j+1}-\theta_{m-1}^{j}| \leqslant \varepsilon |\theta_{m}^{j+1}|$
} %
$\bullet$ Concatenate $ {\const}^{(k+1)}=[\const^{(k)};{\const}]\;\; , \;\; \const=\left[0\right] $\;

$\bullet$ If dim(${\const}^{(k+1)}) \geqslant n_{c_{lim}}$, then ${\const}^{(k)}=\const^{(0)}$
}
\end{algorithm2e}

For better computational efficiency, a restart parameter can be introduced in order to limit the size of the augmentation space associated with parameter $n_{c_{lim}}$ in Algorithm~\ref{SRKS}. This limit size can be set after a complexity analysis under the assumption that all non-augmented systems would be solved in the same number of iterations. However, we did not use such a restart procedure in our experiments.

In order to be even more selective, we propose a reselection strategy based on a prediction of the efficiency of the retained vectors. Indeed, the results of Section~\ref{subs:evdba} in terms of the effect of the distribution of the eigenvalues lead us to retain only the converged Ritz vectors which belong to the external part of the spectrum:
\begin{itemize}
\item this is known to be the first part of the spectrum whose approximation by Ritz values is good;
\item since the convergence of Ritz vectors is identified by the stagnation of the associated Ritz values, the fact that the external Ritz values are distinct ensures that the Ritz vectors approximate the eigenvectors correctly \cite{JIA:2004:CRV};
\item while choosing vectors in the dense central zone does not modify the shape of the spectrum and does not improve convergence, selecting the external part of the spectrum triggers superconvergence instantly.
\end{itemize}

In order to select only the external part of the spectrum, we implemented the cluster identification algorithm proposed in \cite{MOLINARI:2001:MTCD}. This algorithm seeks the piecewise constant distribution which is nearest (in a least squares sense) to the distribution of the distances among the sorted eigenvalues. The only parameter required is the minimum size of the cluster, which we set at one-fifth the number of preselected vectors. As will be shown in the next section, the performance achieved with this reselection algorithm is not outstanding, but some results in terms of gain per augmentation vector are worth considering.

\section{Numerical assessments}\label{sec:assess}

We present three numerical experiments. Two concern the evaluation of a structure made of random materials, as is the case in a Monte-Carlo simulation. In the first case, the materials are elastic; in the second case, which is a nonlinear problem, they are elastic-plastic. The last case is a large displacement problem, which raises specific difficulties.

The methods were implemented in the ZEBULON code \cite{ZEBUUSER:2001} and parallelism was introduced using MPI. The calculations were performed on the LMT-Cachan cluster, which consists of dual quadcore and dual hexacore processors connected by a gigabit network. The calculations were always carried out on homogeneous sets of processors which were entirely dedicated to one task which fit entirely in memory, so swapping was not necessary. In each case, we indicate the CPU time which measures the amount of work performed for one subdomain. The Wall Clock Time (WCT), a global measure which is more sensitive to external perturbations induced by the operating system and the presence of other users, was considered to be unreliable in many cases; so we mention it only for the first set of experiments. One should note that the gains calculated with WCT were always greater.

The CPU plots show the total time as well as the time dedicated to augmentation (preparation of the coarse operator, initialization and projections); the difference represents the iterations of the solver.

All the calculations used a dual formulation of the interface problem through domain decomposition (FETI). The convergence was evaluated using the norm of the residual (which corresponds to the displacement gaps at the interfaces) normalized by the condensed right-hand side. Classically for such structural problems, total reorthogonalization was used to enforce the $\op$-conjugation of the search directions. (The case without reorthogonalization is discussed briefly in the first example.)

\subsection{The case of a sequence of linear systems}
We considered a cube (of side $50$ mm) with $4\times4\times4=64$ small cubic inclusions (of side $5.5$ mm). A slice through this structure is shown in Figure~\ref{fig:cube}. The cube was clamped over one side, and the opposite side plus another side were subjected to uniform pressure. The mesh consisted of $125,000$ linear hexahedral elements for a total of $400,000$ degrees of freedom. Three automatic decompositions (into 12, 48 and 96 subdomains) were performed using the Metis algorithm \cite{METIS:1998} (see Figure~\ref{fig:cubedec}). The resulting interface system contained $54,000$ unknowns for the 12-subdomain decomposition, $96,000$ unknowns for the 48-subdomain decomposition and $133,000$ unknowns for the 96-subdomain decomposition. All the materials were isotropic, linear and elastic, and were characterized by their Young's modulus and Poisson's coefficient. The material properties of each inclusion and of the matrix were chosen randomly following a normal law with a relative standard deviation equal to $10\%$, leading to a $\pm 23\%$ variation range about the nominal value. The average Young's modulus was $200$ MPa for the matrix and $20,000$ MPa for each inclusion, and the average Poisson's coefficient was $0.27$ for the matrix and $0.35$ for each inclusion. The objective was to perform the calculations for $40$ draws of the $130$ coefficients.
 \begin{figure}[ht]\centering
 \begin{minipage}{.49\textwidth}\centering
    \includegraphics[width=0.9\textwidth]{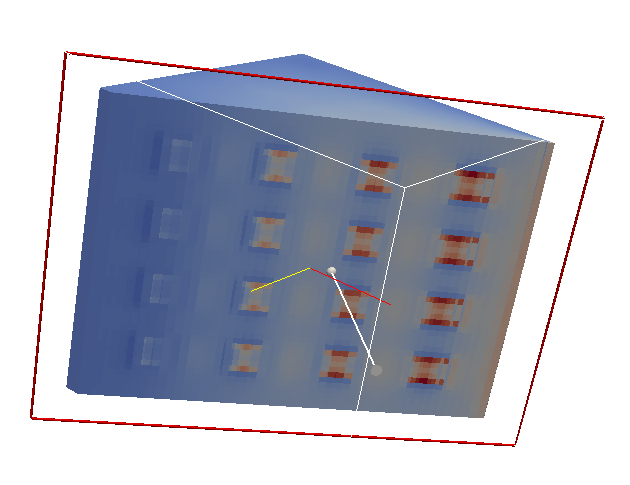}
    \caption{A slice through the heterogeneous cube (shear stress)}  \label{fig:cube}
    \end{minipage}\hfill
     \begin{minipage}{.49\textwidth}\centering
    \includegraphics[width=0.8\textwidth]{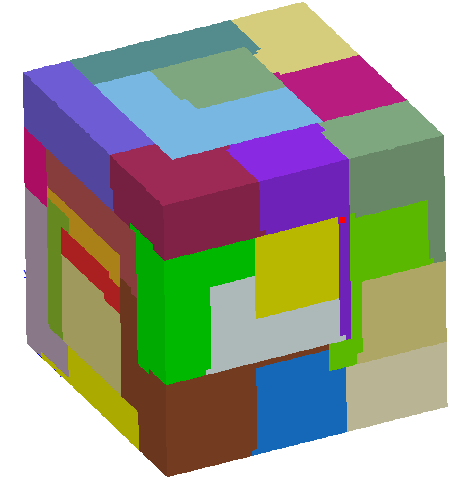}
    \caption{The decomposition into 48 subdomains}  \label{fig:cubedec}
    \end{minipage}
\end{figure}

We used a dual formulation (FETI) with both a Dirichlet (optimal) and a lumped preconditioner, leading to $10^{-3}$ and $10^{-6}$ APCG accuracy respectively.
We considered the following algorithms: conjugate gradients (cg), total reuse of Krylov subspaces (trks) and selective reuse of Krylov subspaces with two values of the criterion, $\varepsilon=10^{-6}$ (srks6) and $\varepsilon=10^{-14}$ (srks14). In addition, in the last case ($\varepsilon=10^{-14}$), we also attempted to further refine the selection by not selecting the converged Ritz values contained in the central cluster (identified by the algorithm proposed by \cite{MOLINARI:2001:MTCD}); this method is labeled (clust14).
\subsubsection{Comparison of the strategies}\

The results for the 12-subdomain decomposition are summarized in Table~\ref{tab:cubinc}, which gives the average number of APCG iterations to convergence, the average size of the augmentation space, the final size of the augmentation space, the average CPU and wall clock times per system, from which we also deduced the augmentation time (operator preparation and projection). Computations were conducted one dual hexacore processor (one subdomain per core). When the average and final sizes of the augmentation space are close, this means that most of the augmentation space was identified with the first systems. For a given configuration (accuracy and preconditioner), the figures in bold in the three columns `average number of iterations', `average CPU time' and `average wall clock time' indicate the best strategy in terms of gain per unit augmentation vector compared to CG.

For the 12-subdomain decomposition, Figures~(\ref{fig:cub3i}, \ref{fig:cub3a}, \ref{fig:cub3t}) (for an objective of $10^{-3}$ accuracy) and Figures~(\ref{fig:cub6i}, \ref{fig:cub6a}, \ref{fig:cub6t})) (for an objective of $10^{-6}$ accuracy) give the evolutions of the number of APCG iterations to convergence for each linear system, the dimension of the augmentation space $n_c$ and the the CPU time for the resolution of each system, with both lumped and Dirichlet preconditioners. \medskip

\begin{table}\centering
\begin{tabular}{|c|c|c||p{.7cm}|p{.7cm}|p{.7cm}|p{.7cm}|p{.7cm}|p{.7cm}|p{.7cm}|}\hline
\multicolumn{3}{|c||}{} & \multicolumn{7}{|c|}{12 subdomains} \\\hline
accur. &  precond. &   & avg. \# it & avg. $n_c$ & max $n_c$ &  avg. total CPU & avg. CPU aug. & avg. total  WCT& avg. WCT aug. \\\hline
\multirow{12}*{$10^{-3}$}	&	\multirow{5}*{\rotatebox{90}{Dirichlet}}	
   &	cg (no reo.)	&	70	& ---& ---& 23.5 & 0 & 31.8 & 0\\
    &    &	cg		&	44.3	& ---& ---& 16.0 &0& 24.5 &0 \\
	&	&	trks	&	2.4	& 77.8 & 96 &	7.1 & 6.3 & 9.1 & 7.6\\
	&	&	srks6	&	20.5 & 41.3 & 50 &	11.0 & 3.8 & 15.6 & 4.7\\
	&	&	srks14	&   25.6 & 24.6 & 27 & \textbf{11.7} & 2.7 &\textbf{17.1} & 3.5\\
	&	&	clust14	&   \textbf{30.6}	& 16.8 & 24 & 13.8 & 2.2 & 25.3 & 3.2 \\\cline{2-10}
	&	\multirow{5}*{\rotatebox{90}{Lumped}}	
       &	cg (no reo.)	&	145	& ---& ---& 27.7 & 0 & 40.3&0 \\
    & & cg		&	68.1	& ---& ---& 13.6 & 0 & 24.0&0\\
	&	&	trks	&	\textbf{0.4} & 71.8 & 74 &	\textbf{5.7 } &5.6& \textbf{6.8}&6.7\\
	&	&	srks6	&	27.4	& 81 & 108&	12.3 &6.5& 18.0&8.0 \\
	&	&	srks14	&   32.0	& 59.6 & 71 & 11.8 & 5.1 & 18.1 &6.4\\
	&	&	clust14	&   51.1	& 20 & 39 & 12.8 & 2.3& 21.7&3.2 \\\hline
\multirow{12}*{$10^{-6}$}	&	\multirow{6}*{\rotatebox{90}{Dirichlet}}	
        &	cg (no reo.)	&	174	& ---& ---& 58.3 &0& 85.8&0 \\
     &   &	cg		&	84.4	& ---& ---& 30.0 &0& 49.1& 0\\
	&	&	trks	&	13.2	& 382.9& 551&	46.5 &41.3& 60.4&52.8\\
	&	&	srks6	&	36.7	& 104.3& 142&	22.3 &8.6& 35.7&11.0 \\
	&	&	srks14	&   42.8	& 72.6 & 87 & 22.7 &6.3& 36.2& 8.1\\
	&	&	clust14	&   \textbf{60.8}	& 35.2 & 77 & \textbf{25.7} & 3.6&\textbf{39.8 }& 4.9\\\cline{2-10}
	&	\multirow{5}*{\rotatebox{90}{Lumped}}	
        &	cg (no reo.)	&	{\tiny $>$}400& ---& ---& {\tiny $>$}78&0&{\tiny $>$}110 & 0 \\
    &   &	cg		&	147.7	& ---& ---& 31.7 &0& 61.0&0 \\
	&	&	trks	&	16.3	& 516.7& 735&70.2 &65.7& 93.3&85.9\\
	&	&	srks6	&	54.2	& 225.7& 311&	33.4 &20.5& 49.4&25.6\\
	&	&	srks14	&   60.6	& 170.2& 216& 29.2 &15.0 &44.2&18.3\\
	&	&	clust14	&   \textbf{129.4}	& 20 & 39 & \textbf{30.4} &2.6& \textbf{55.6}&4.0 \\\hline
\end{tabular}
\caption{Performance summary for the cube with inclusions}\label{tab:cubinc}
\end{table}
\begin{figure}[ht]
\begin{minipage}{.5\textwidth}\centering
\includegraphics[width=0.99\textwidth]{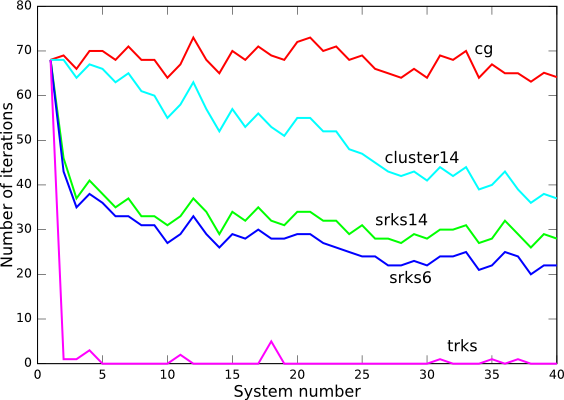}
    \caption{Cube 12 subdomains, lumped, $10^{-3} accuracy$, number of iterations per linear system}  \label{fig:cub3i}
\end{minipage}
\begin{minipage}{.5\textwidth}\centering
\includegraphics[width=0.99\textwidth]{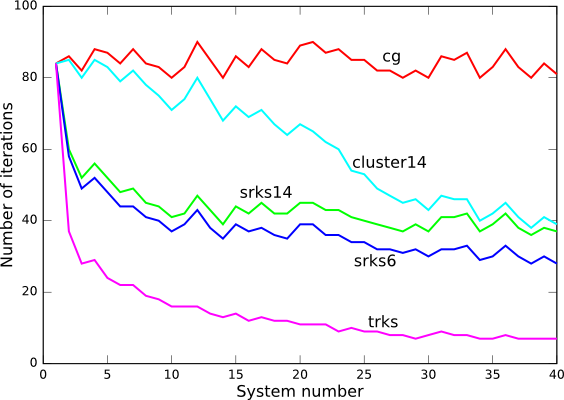}
    \caption{Cube 12 subdomains, Dirichlet, $10^{-6} accuracy$, number of iterations per linear system}  \label{fig:cub6i}
\end{minipage}
\end{figure}
\begin{figure}[ht]
\begin{minipage}{.5\textwidth}\centering
\includegraphics[width=0.99\textwidth]{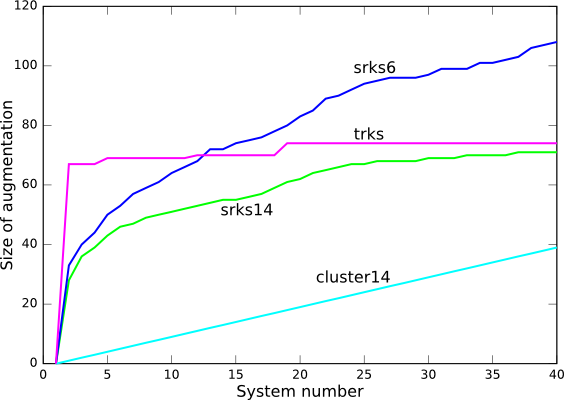}
    \caption{Cube 12 subdomains, lumped, $10^{-3} accuracy$, dimension of the augmentation space}  \label{fig:cub3a}
    \end{minipage}
\begin{minipage}{.5\textwidth}\centering
\includegraphics[width=0.99\textwidth]{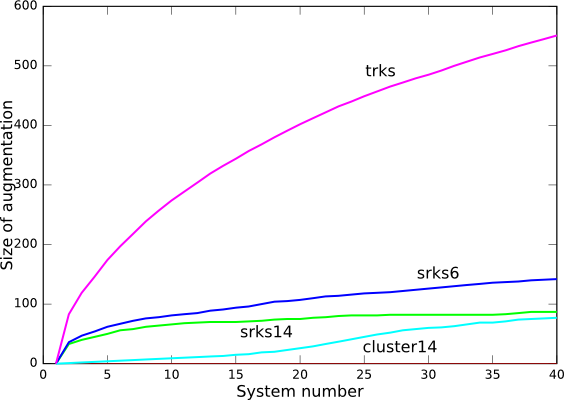}
    \caption{Cube 12 subdomains, Dirichlet, $10^{-6} accuracy$, dimension of the augmentation space}  \label{fig:cub6a}
    \end{minipage}
\end{figure}
\begin{figure}[ht]
\begin{minipage}{.5\textwidth}\centering
\includegraphics[width=0.99\textwidth]{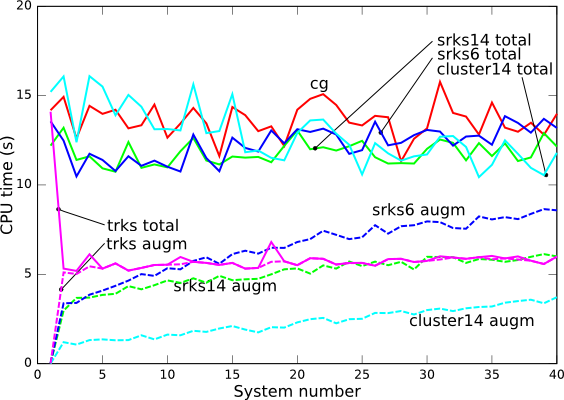}
    \caption{Cube 12 subdomains, lumped, $10^{-3} accuracy$, CPU time per linear system}  \label{fig:cub3t}
    \end{minipage}
\begin{minipage}{.5\textwidth}\centering
\includegraphics[width=0.99\textwidth]{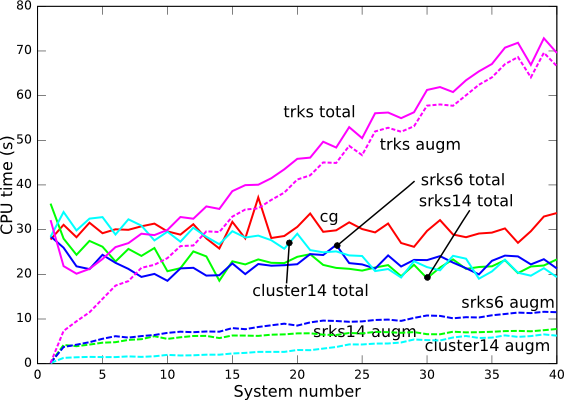}
    \caption{Cube 12 subdomains, Dirichlet, $10^{-6} accuracy$, CPU time per linear system}  \label{fig:cub6t}
    \end{minipage}
\end{figure}

Without full reorthogonalization, the performance was very poor and led to about twice the number of iterations of the recommended fully reorthogonalized conjugate gradients. This was expected because systems resulting from domain decomposition formulations are known to often require full reorthogonalization \cite{FARHAT:1994:ADV}. Furthermore, one should note that the additional iterations carried out in the non-reorthogonalized case led to vector sets which made the Ritz analysis more complex due to the appearance of nonphysical, multiple eigenvalues. The non-reorthogonalized approach was no longer considered in the other examples.

With the TRKS approach, two types of behavior were observed. In the low-accuracy case ($10^{-3}$), for both preconditioners (but especially for the lumped preconditioner), the size of the augmentation space reached a plateau, which means that the augmentation space contained almost all the required information; the gains in terms of both the number of iterations ($>90\%$) and the CPU time ($>55\%$) were excellent. In the high-accuracy case ($10^{-6}$), the size of the augmentation space never stabilized; therefore, even though the number of iterations decreased drastically, the CPU time increased. Table \ref{tab:trksit} gives extended performance results for TRKS which confirm this analysis. The gains are given relative to conjugate gradients. The efficiency of augmentation is defined by the average decrease in the number of iterations per augmentation vector; the higher the required accuracy, the less efficient the TRKS approach. These results justify our decision to select the subspaces so that the dimension of the augmentation space would remain under control.

The SRKS14 approach succeeded in limiting the size of the augmentation space and led to a satisfactory decrease in the number of iterations. As can be seen on the figures, SRKS6 did not stabilize the augmentation space as efficiently and behaved half way between TRKS and SRKS14; therefore, we will choose SRKS14 as our reference algorithm from now on.

The cluster strategy as it stands today gave unsatisfactory results: even though it often led to the best gain per augmentation vector, it seemed to impair the selection of useful vectors and allow much less reduction in the number of iterations than SRKS. After the resolution of many systems, it tended to lead to the same augmentation space as SRKS.

To confirm that hypothesis, we compared the spaces $C_{SRKS}$ and $C_{cluster}$ after the 40 resolutions for the low-accuracy Dirichlet case. We used the following procedure: 
 first, the vectors were orthonormalized using SVD: $C=U\Sigma V^T$; then SVD was applied to the concatenated matrix $[U_{SRKS},U_{cluster}]$. A plot of the singular values is shown in Figure~\ref{fig:svdclustsrks}. Independent spaces would lead to a constant value equal to 1, while for nested spaces the common space would lead to $\{\sqrt{2},0\}$ pairs of singular values. One can observe that the spaces are not exactly nested, but come quite close.

In conclusion, the cluster strategy is not mature yet, but it is promising. It was not considered for the following experiments because, due to the larger number of systems involved, it would behave quite similarly to SRKS.

\begin{figure}[ht]\centering
     \begin{minipage}{.49\textwidth}
    \includegraphics[width=0.9\textwidth]{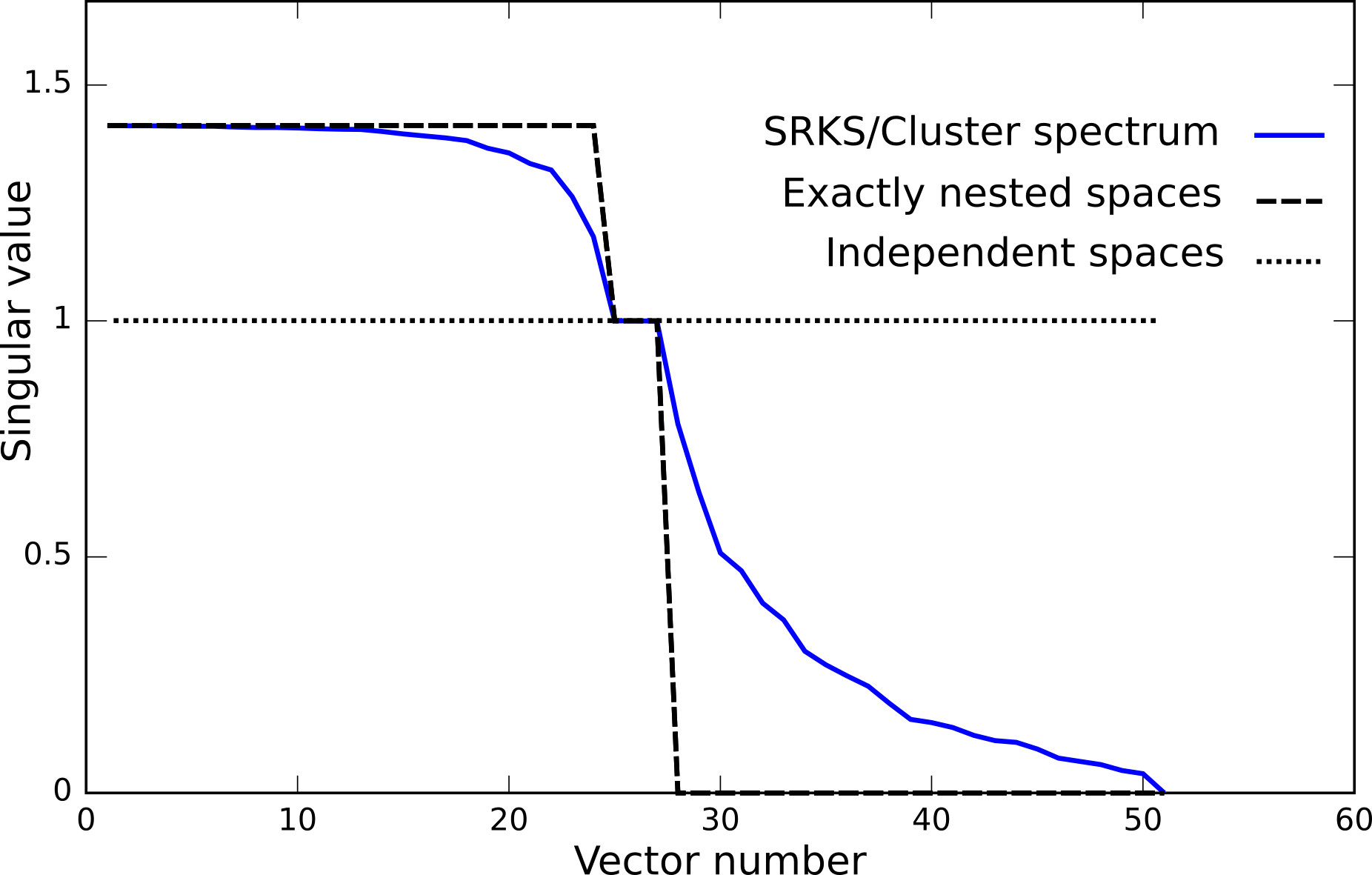}
    \caption{Singular values of $[C_{SRKS},C_{cluster}]$}  \label{fig:svdclustsrks}
    \end{minipage}
\end{figure}

\begin{table}
\centering
\begin{tabular}{|c|c|c|c|c|p{2cm}|}\hline
accur. & precond.	 &  \# subdomains & iteration gain & CPU gain & efficiency of augmentation\\\hline
\multirow{4}*{$10^{-3}$}	&\multirow{2}*{lumped}	&12 &99.4\%	&58.2\%	&0.94 \\
						&							&48	&97.7\%	&60.1\%	&0.97 \\	\cline{2-6}
						&\multirow{2}*{Dirichlet}	&12 &94.7\%	&55.5\%	&0.54 \\
						&							&48	&96.2\%	&62.6\%	&0.69 \\	\hline
\multirow{4}*{$10^{-6}$}&\multirow{2}*{lumped}		&12 &89\%	&-121.2\%	&0.25 \\
						&							&48	&90.2\%	&-162.3\%	&0.37 \\	\cline{2-6}
						&\multirow{2}*{Dirichlet}	&12 &84.4\%	&-55.1\%	&0.19 \\
						&							&48	&83.3\%	&-90.7\%	&0.21 \\	\hline
\end{tabular}\caption{Relative performance of TRKS}\label{tab:trksit}
\end{table}

\subsubsection{Study of SRKS14 in various configurations}\

Table~\ref{tab:srks14it} shows the relative performance of SRKS14 as a function of the number of subdomains, of the preconditioners and of the accuracy. The efficiency of the augmentation is defined as the decrease in the average number of iterations per augmentation vector. One can observe that the efficiency ranged between $0.5$ and $0.85$ and was best for the lower accuracy and the improved preconditioner. For these spectra in which there exist no small isolated eigenvalues (which could lead to efficiencies greater than 1), such results are consistent with the theory (see Section~\ref{subs:evdba}). In the next section, we will see that this moderate efficiency does not preclude significant CPU improvements.

The gains in terms of the number of iterations were relatively stable, typically between $50\%$ and $60\%$ in the high-accuracy case.

\begin{table}[ht]
\centering
\begin{tabular}{|c|c|c|p{1.7cm}|c|c|p{2cm}|}\hline
 precond.	 & accur. & \# subdomains & CG avg. \#~iterations & avg. $n_c$& iteration gain  & efficiency of augmentation\\\hline
\multirow{4}*{lumped}	&\multirow{2}*{$10^{-3}$}	&12&68.1 	&59.6	&52.9\%		&0.6\\
						&							&48& 43.7	&28.2	&54.2\%		&0.84 \\
\cline{2-7}
						&\multirow{2}*{$10^{-6}$}	&12& 147.7	&170.2	&59\%	&0.51\\
						&							&48& 162.7	&186.3	&62.5\%		&0.55 \\
\hline
\multirow{6}*{Dirichlet}&\multirow{3}*{$10^{-3}$}	&12& 43.3	&24.6	&42.2\%		& 0.76\\
						&							&48&50.7	&31	&48.6\%		&0.79 \\
						&							&96&67.4	&51.1	& 51.7\%	& 0.68 \\\cline{2-7}
						&\multirow{3}*{$10^{-6}$}	&12& 84.4	&72.6	&49.3\%		&0.57\\
						&							&48&116.	&111.5	&57.2\%		&0.6 \\
						&							&96&140.7	&141.9	& 60.3\%	& 0.6\\\hline
\end{tabular}\caption{Iteration gains for SRKS14}\label{tab:srks14it}
\end{table}

\subsubsection{Influence of the hardware configuration on the CPU gains}\

Now, let us study the performance of SRKS14 in terms of CPU time for the same decomposition into 48 subdomains, but using different hardware configurations:
\begin{enumerate}
\item Configuration A corresponds to 4 dual hexacore nodes with 1 subdomain per core;
\item Configuration B corresponds to 6 dual quadcore nodes with 1 subdomain per core;
\item Configuration C corresponds to 3 dual quadcore nodes with 2 subdomains per core;
\item Configuration D corresponds to 2 dual quadcore nodes with 3 subdomains per core.
\end{enumerate}
One can note that the processors in Configuration A were different from those used in the other cases.
%
In all the cases, the memory was sufficient to avoid swapping. The results are given in Table~\ref{tab:srks14cpudiri} for the Dirichlet preconditioner and in Table~\ref{tab:srks14cpulumped} for the lumped preconditioner. One can see that Configurations B,C and D had similar performances and were slower than Configuration A due to the different memory technology.

One interesting factor is the ratio of the average CPU cost of an iteration to the average CPU cost of an augmentation vector (the last columns of Table~\ref{tab:srks14cpudiri} and~\ref{tab:srks14cpulumped}). One can see that in Configuration A, 4 augmentation vectors cost no more than one iteration; in the other configurations 7 augmentation vectors cost no more than one iteration. Since we saw that one needs about $1/0.6\simeq 1.6$ augmentation vectors to save one iteration, the advantage of augmentation is clear. Indeed, we observe a 32\% CPU improvement in Configuration A and a $40\%$ to $50\%$ improvement in the other configurations.

Note that when the lumped preconditioner is used the equivalent cost of an iteration is only 2.8 augmentation vectors in Configuration A and 4.5 augmentation vectors in Configuration D (see Table~\ref{tab:srks14cpulumped}). Since the efficiency of the augmentation vectors is less when this inexpensive preconditioner is used (in the high accuracy case), so is the CPU improvement.

\begin{table}[ht]
\centering
\begin{tabular}{|c|c|c|p{3.5cm}|}\hline
 Configuration	 & CG avg.  CPU & CPU gain & CPU~per~iteration / CPU~per~augm.~vector \\\hline
A & 9.5 & 32.6\%& 4 \\\hline
B & 25.7& 41.5\%& 6.7\\\hline
C & 29.7& 48.9\%& 7.7\\\hline
D & 29.9& 47.6\%& 7.8\\\hline
\end{tabular}\caption{CPU performance of SRKS14 for $10^{-6}$ accuracy with the Dirichlet preconditioner}\label{tab:srks14cpudiri}
\end{table}
\begin{table}[ht]
\centering
\begin{tabular}{|c|c|c|p{3.5cm}|}\hline
 Configuration	 & CG avg.  CPU & CPU gain & CPU~per~iteration / CPU~per~augm.~vector \\\hline
A & 10.7& 22.4\%& 2.8 \\\hline
D & 27.4 & 33.3\%& 4.5\\\hline
\end{tabular}\caption{CPU performance of SRKS14 for $10^{-6}$ accuracy with the lumped preconditioner}\label{tab:srks14cpulumped}
\end{table}

The ratio of the CPU time per iteration to the CPU time per augmentation vector for SRKS (Column 4 of the previous tables) turned out to be relatively stable for a given machine with a given preconditioner. This is due to the stability of the size of the augmentation space which prevented the cost from soaring (as would happen with TRKS). Thus, the CPU performance can be deduced from the iteration gains and the augmentation efficiency (see Table~\ref{tab:srks14it}). For instance, the CPU gain for SRKS with the 96-subdomain decomposition was slightly greater than 50\%.

\subsection{The case of a sequence of nonlinear problems}
Now let us consider a hexahedral holed plate ($10\times 10\times 0.2$ mm with a center hole of radius $1$ mm, see Figure~\ref{fig:plaque}) subjected to unidirectional tension (a prescribed normal displacement). The plate was discretized into $61,000$ linear hexahedral elements for a total of $41,000$ degrees of freedom. The structure was divided into 8 subdomains using the Metis algorithm, which resulted in an interface system with $3,000$ unknowns. The problem was solved using one 8-core processor (one subdomain per core). Elastic-plastic behavior with nonlinear isotropic hardening and a Von Mises'-type plasticity criterion was assumed. Denoting $\sigma$ the Cauchy stress tensor, $\epsilon(u)$ the symmetric gradient of the displacement field $u$, and $\mK$ the Hooke tensor, the material law can be written as:
\begin{equation}
\left\{\begin{array}{l}
  \epsilon(u)={\epsilon}^{{e}}+{\epsilon}^{{p}}, \qquad {\sigma} = \mK : {\epsilon}^{{e}} \\
 \text{ if } f(\sigma) =0      \; \text{ then } \; \dot{\epsilon}^{{p}} =   \multlag   f_{_{,\sigma}} \\
 \text{ if } f(\sigma) \leqslant 0 \; \text{ then } \; \dot{\epsilon}^{{p}} = 0 \\
f(\sigma ) =  \sqrt{ \frac{3}{2} \; \sigma :\sigma } - \left(R_{_{0}} + Q \left( 1 -e^{- b \multlag}\right)\right)
\end{array}\right.
\end{equation}
The coefficients were assigned a normal law with a $10\%$ relative standard deviation, which implied variations of up to $\pm 23\%$ in the coefficients. The mean values of the material parameters were: $E=200,000$ MPa,  $\nu=0.3$, $R_0=300$ MPa, $b=22$ and $Q=170$ MPa. The loading was applied in two steps: first, a single increment to reach the elastic limit; then, $16$ equal increments in order to multiply the prescribed displacement by $4$. The objective of the study was to analyze $21$ configurations.

Again, the linear solver used was FETI with a Dirichlet or lumped preconditioner. The accuracy objective for the linear systems was set at $10^{-6}$. (The accuracy must be high for the nonlinear process to run well). Because of the approximations, not all the methods converged in the same number of Newton iterations; on average, one nonlinear analysis required the resolution of 95 tangent systems. Table~\ref{tab:plaque} summarizes the performances of the various methods; Figure~\ref{fig:plate_i} shows the evolution of the average number of APCG iterations with the lumped preconditioner during the sequence of linear systems; Figure~\ref{fig:plate_a} shows the evolution of the size of the augmentation space; Figures~\ref{fig:plate_cpu} and~\ref{fig:plate_wall} show the evolutions of the average CPU time and wall clock time for the resolution of one linear system along with the evolution of the average augmentation time (operator creation and projection).

\begin{minipage}{.3\textwidth}\centering
    \includegraphics[width=0.99\textwidth]{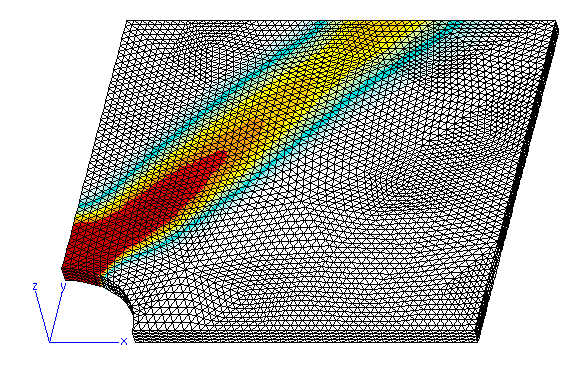}
    \captionof{figure}{The holed plate example (plastic strain)}  \label{fig:plaque}
    \end{minipage}
\begin{minipage}{.7\textwidth}\centering
\begin{tabular}{|c|c|p{.65cm}|p{.65cm}|p{.65cm}|p{.65cm}|p{.65cm}|}\hline
precond. & method  & avg. \# it & avg. $n_c$ & max $n_c$ &   avg. CPU & avg. WCT\\\hline
Dirichlet & cg & 25.6 & -- & -- & 1.21 & 3.03 \\
Dirichlet & trks$^*$ & 1.4 & 358 & 492 &  2.66 &  9.83 \\
Dirichlet & srks14 & \textbf{16.1} & 17 & 19 &  \textbf{0.98} & \textbf{2.35}\\\hline
lumped & cg & 41.4 & -- & -- &  1.03 & 3.24\\
lumped & trks$^*$ & 1.2 & 520 & 695 &  4.72 &6.98\\
lumped & srks14 & \textbf{19.1} & 43 & 45 &  \textbf{0.87}&\textbf{2.08}\\\hline
\end{tabular}

{\footnotesize $^*$ calculation too slow, was stopped before all the systems were solved
}
\captionof{table}{Holed plate, performance summary}  \label{tab:plaque}
\end{minipage}

\begin{figure}[ht]
\begin{minipage}{.5\textwidth}\centering
\includegraphics[width=0.99\textwidth]{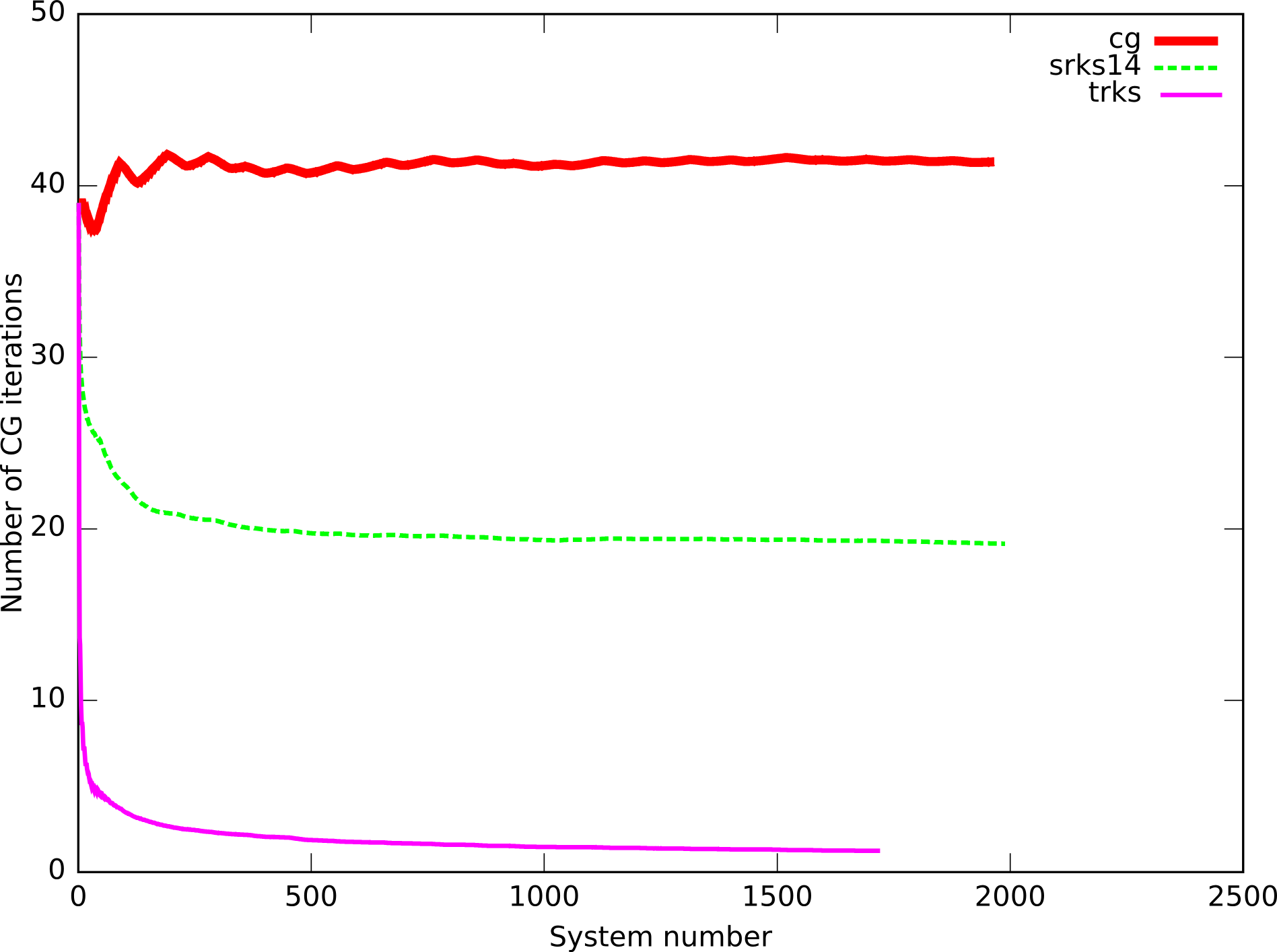}
    \caption{Plate, lumped -- avg. \# it. / linear system}  \label{fig:plate_i}
\end{minipage}
\begin{minipage}{.5\textwidth}\centering
  \includegraphics[width=0.99\textwidth]{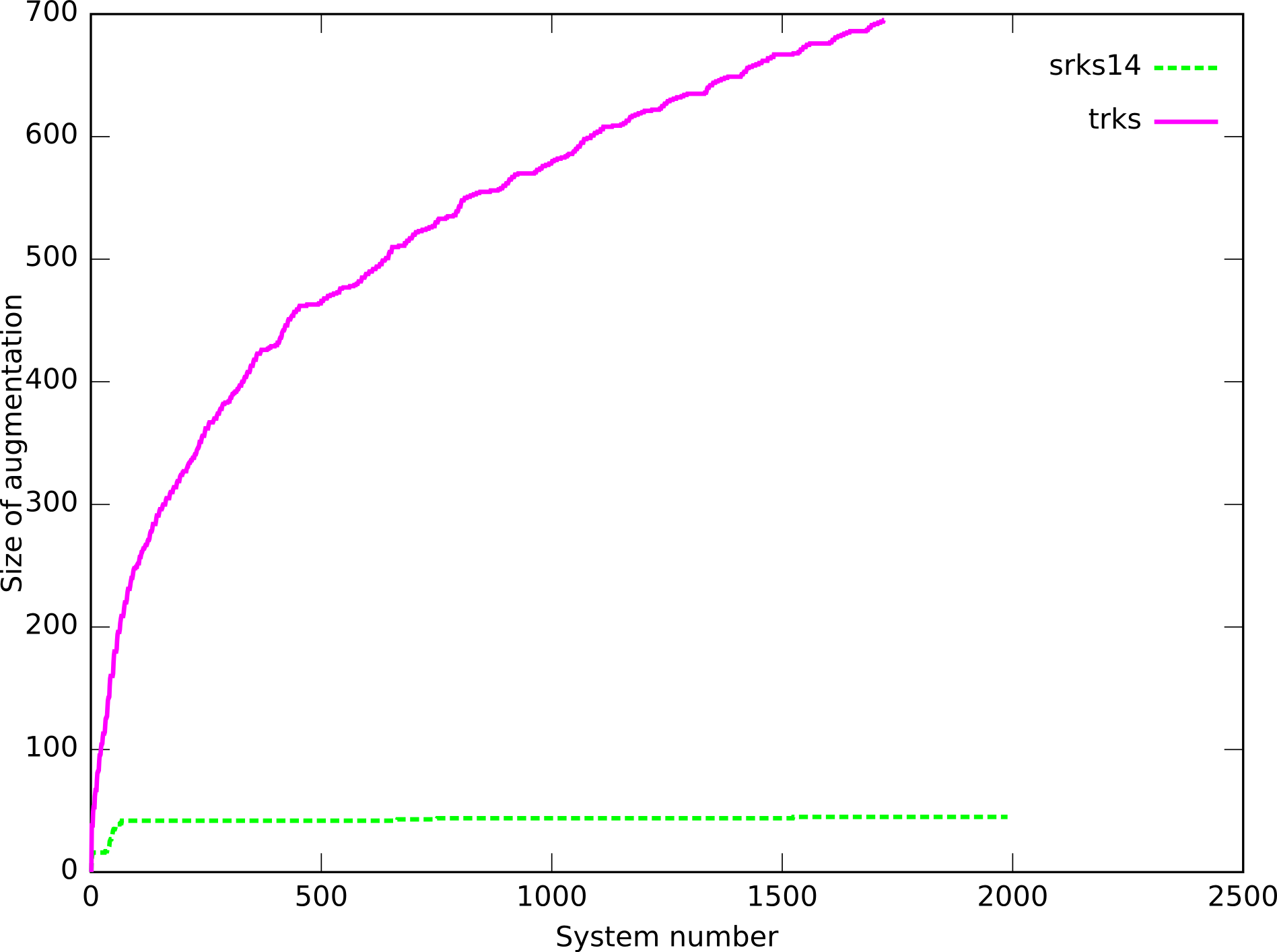}
    \caption{Plate, lumped -- dimension of aug. space}  \label{fig:plate_a}
\end{minipage}
\end{figure}
\begin{figure}[ht]
\begin{minipage}{.5\textwidth}\centering
\includegraphics[width=0.99\textwidth]{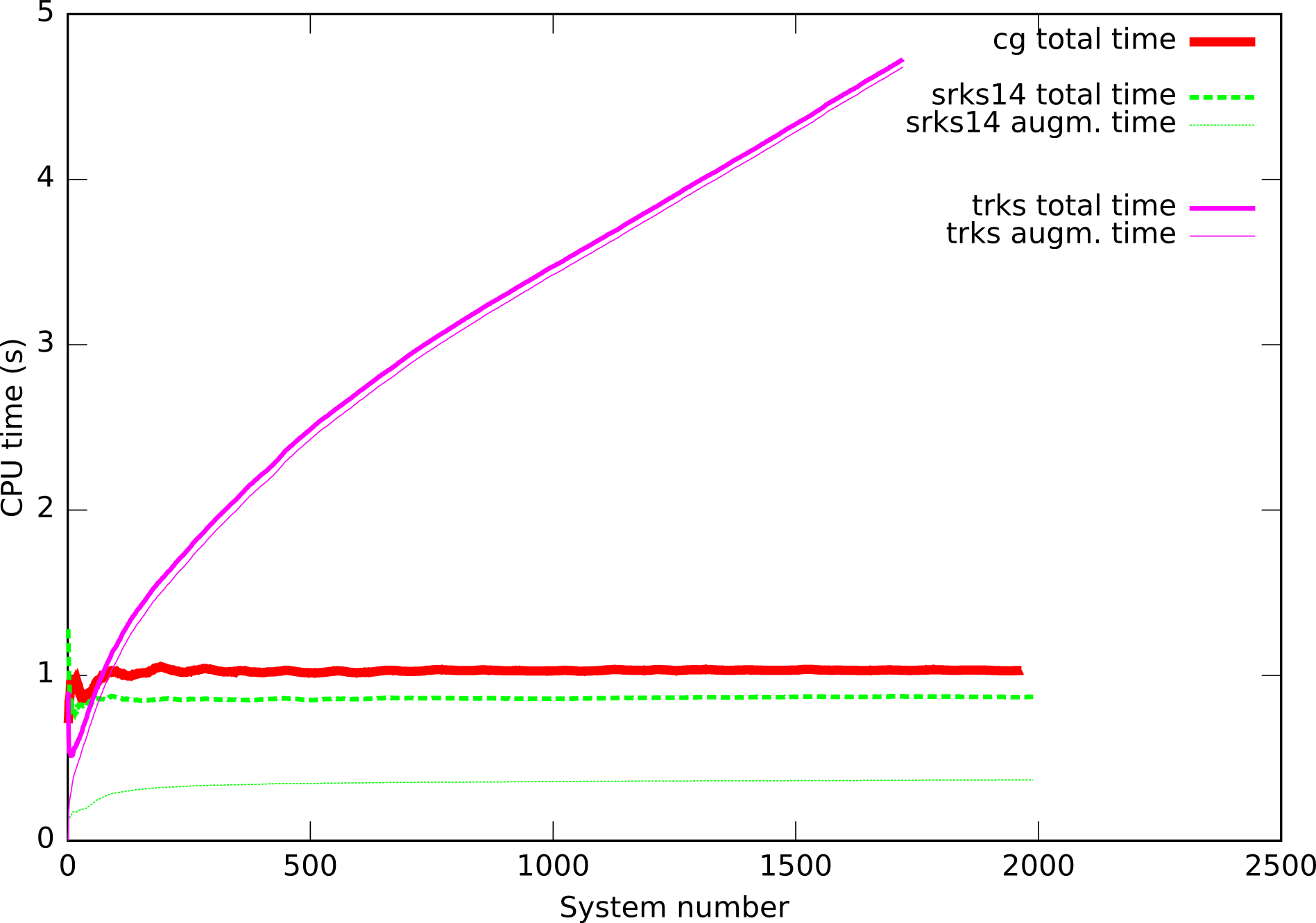}
    \caption{Plate, lumped -- avg. CPU time / system}  \label{fig:plate_cpu}
\end{minipage}
\begin{minipage}{.5\textwidth}\centering
  \includegraphics[width=0.99\textwidth]{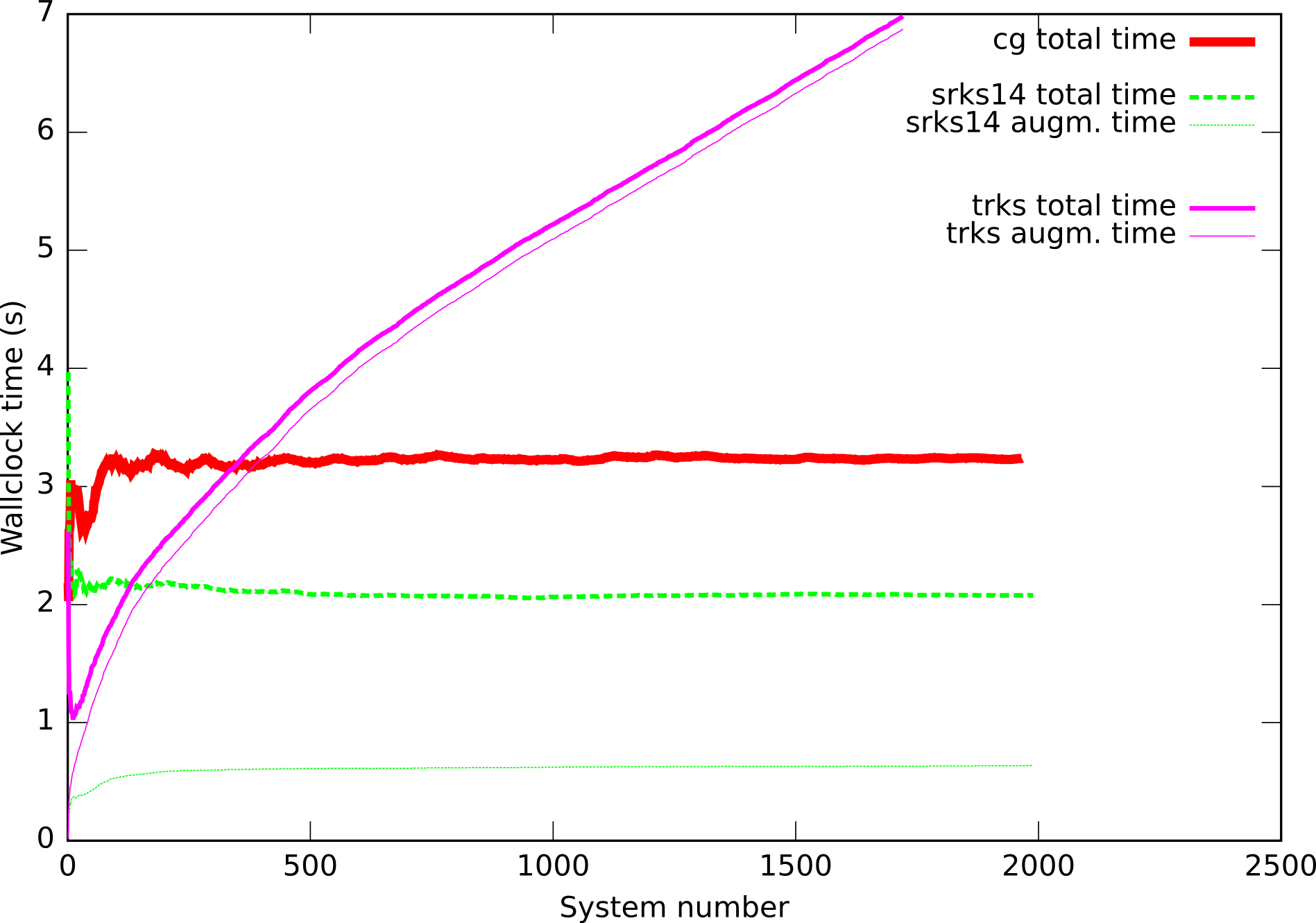}
    \caption{Plate, lumped -- avg. wall clock time / system}  \label{fig:plate_wall}
\end{minipage}
\end{figure}

This test leads to conclusions similar to the previous ones. More specifically, one can observe that the SRKS augmentation space selected after the first nonlinear configuration remained stable. Conversely, since the TRKS augmentation space never reached a plateau, the solutions of the linear systems did not belong to a common space. SRKS was the most efficient method, leading to a 20\% CPU gain and a 36\% wall clock time improvement.

\subsection{The case of a large displacement problem}

Finally, let us consider the problem of the buckling of a straight heterogeneous beam with a circular cross section (length/diameter ratio equal to $30$), clamped at one end and subjected to an axial pressure at the other, with no radial displacement. The heterogeneities consisted of five straight fibers whose stiffness was $1,000$ times that of the matrix. The problem was formulated in the updated Lagrangian framework, assuming linear elastic behavior (characterized by the Young's modulus and Poisson's coefficient) in the current configuration. The beam was discretized into $90,000$ linear hexahedral finite elements for a total of $300,000$ degrees of freedom. It was divided into $10$ subdomains using the Metis algorithm, leading to an interface system with $16,000$ unknowns. A single 12-core processor was used (1 subdomain per core, leaving 2 inactive cores). The pressure was applied incrementally up to the configuration shown in Figure~\ref{fig:pout_flamb}, in which the maximum axial displacement was about $3\%$ of the total length. 12 increments were used, leading to the resolution of about 30 tangent linear systems.

We used a FETI solver with a Dirichlet preconditioner and an ``identity'' projector. The FETI convergence criterion was set to $10^{-6}$. Figure~\ref{fig:pout_flamb_iter} shows the evolution of the number of conjugate gradient iterations required for the resolution of each linear system. Figure~\ref{fig:pout_flamb_aug} shows the evolution of the size of the augmentation space.
\begin{figure}[ht]
\begin{minipage}{.5\textwidth}\centering
   \includegraphics[width=0.99\textwidth]{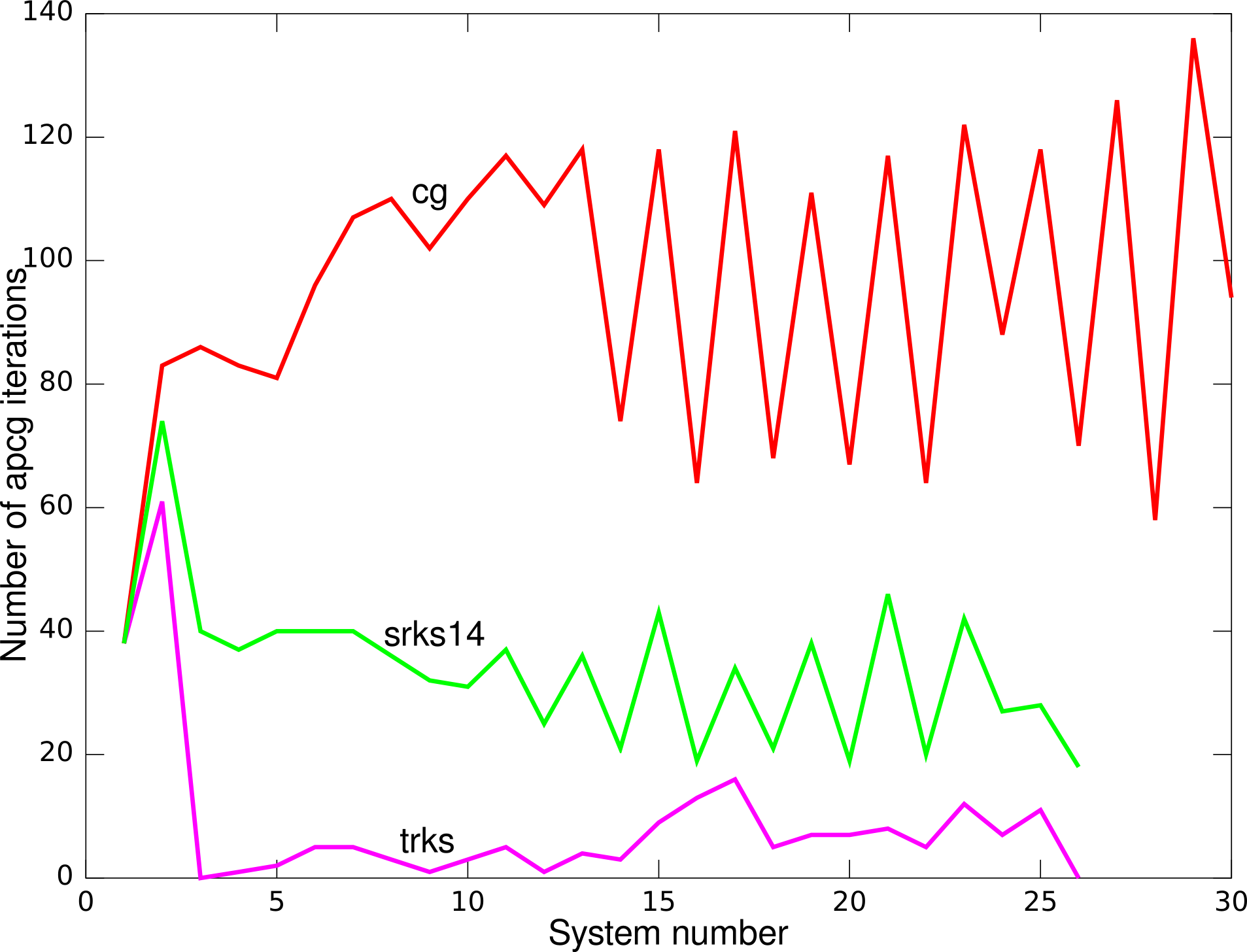}
   \caption{Buckling of the heterogeneous beam: number of iterations per linear system}  \label{fig:pout_flamb_iter}
\end{minipage}
\begin{minipage}{.5\textwidth}\centering
   \includegraphics[width=0.99\textwidth]{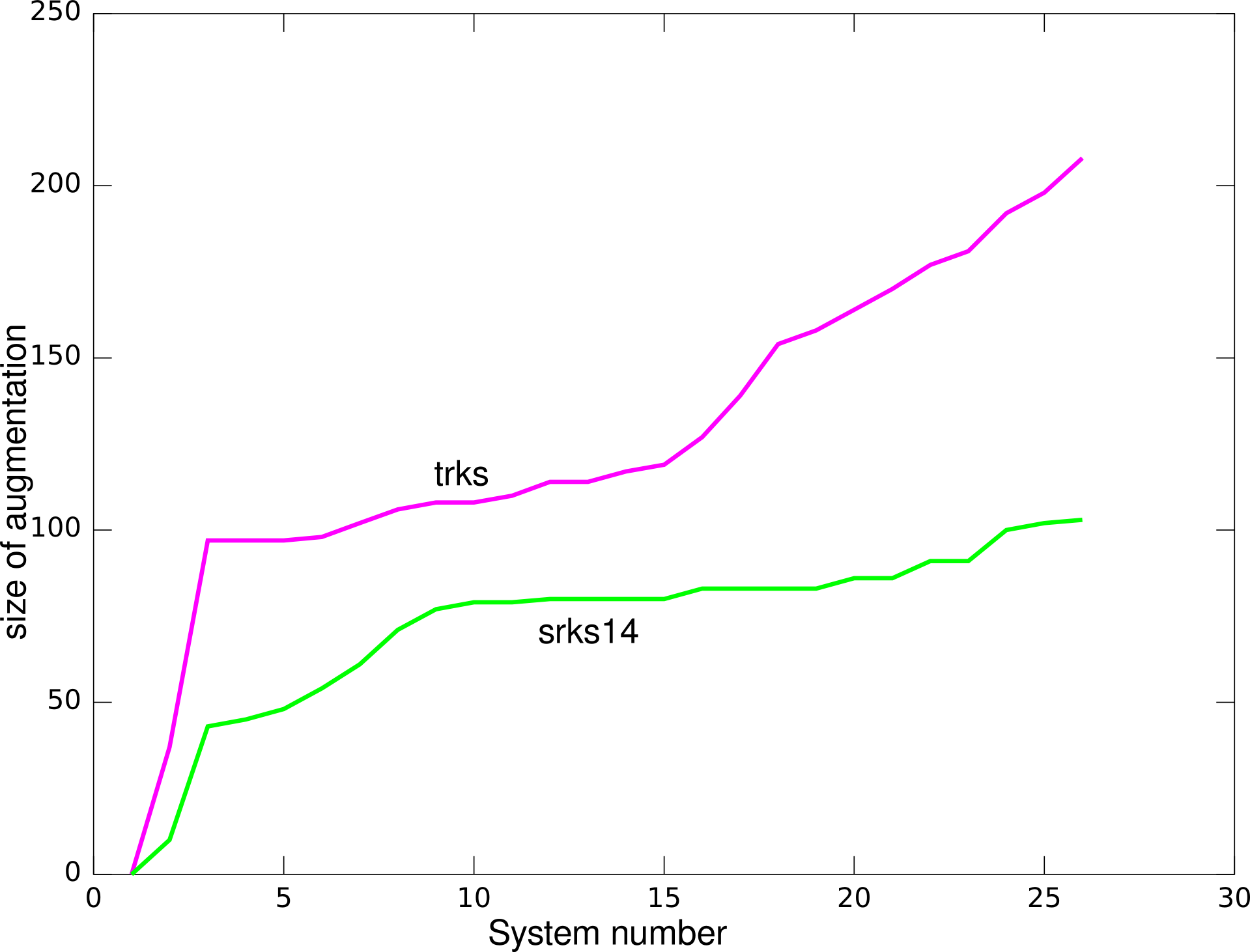}
   \caption{Buckling of the heterogeneous beam: dimension of the augmentation space for each linear system}  \label{fig:pout_flamb_aug}
\end{minipage}
 \end{figure}
Three algorithms were tested: classical conjugate gradients, total reuse of subspaces, and selective reuse of subspaces ($\varepsilon=10^{-14}$). Table~\ref{tab:pout} summarizes the main results.

\begin{figure}[ht]\centering
\begin{minipage}[b]{.45\textwidth}\centering
   \includegraphics[width=0.95\textwidth]{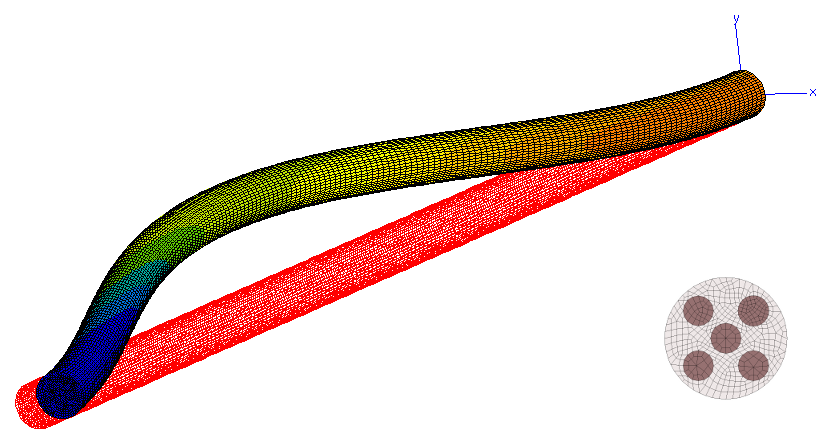}
   \captionof{figure}{The beam in the reference and deformed configurations, with a view of the cross section}  \label{fig:pout_flamb}
\end{minipage}\hfill
\end{figure}
\begin{table}[ht]\centering
\begin{minipage}[b]{.5\textwidth}\centering
  \begin{tabular}{|c|c|c|c|c|}\hline
   method      & avg. \# it. & avg. $n_c$  & CPU & WCT\\\hline
  CG & 97.2 & -- & 392 & 504 \\
TRKS & 7.8 & 131.7 & 203 & 217 \\
SRKS 14 & \textbf{33.8} & 75.1 & \textbf{278} & \textbf{298}\\\hline
  \end{tabular}
  \captionof{table}{Recycling performance for the buckling problem}\label{tab:pout}
  \end{minipage}
\end{table}

The following observations can be made:
\begin{itemize}
  \item The performance of TRKS was really impressive because the systems converged in 10 times fewer iterations than with CG, but the size of the associated augmentation space was large (up to 208 vectors) and never ceased to increase;
  \item SRKS appeared to be efficient: the number of iterations was divided by 3 with a space whose size increased slowly, then reached a plateau;
  \item With the hardware configuration used, the best results in terms of computation time (a 48\% CPU improvement) were achieved with TRKS, but the gain normalized by the number of augmentation vectors was better with SRKS (a 29\% CPU improvement). SRKS was truly successful in controlling the dimension of the augmentation space.
\end{itemize}

In this example, the augmentation proved to be very efficient in terms of the iteration gain per augmentation vector, especially for SRKS (0.85). This was probably because of a specificity of the spectrum of the preconditioned operator due to the use of domain decomposition in large displacements. Indeed, the tangent matrix of a floating subdomain with prescribed Neumann conditions may become non-positive, contrary to the Dirichlet operator which remains positive definite. It is known that a slight lack of positivity of the operator does not prevent reorthogonalized conjugate gradients from converging \cite{PAIGE:1995:ASE}, but convergence is slower than when all the eigenvalues are positive. The positivity of the preconditioner makes the selection procedure still possible. Moreover, the negative eigenvalues are systematically selected by the procedure, so using augmentation causes the solver to iterate in the subspace in which the operator is positive, leading to a much better convergence rate.

\section{Conclusion}
This paper dealt with the resolution of sequences of large linear systems with varying matrices and right-hand sides using conjugate gradients. We proposed several algorithms based on the augmentation of the current Krylov subspace by a selection of previously generated subspaces. The advantage of these methods is that some of the iterations are replaced by the preprocessing of a coarse problem associated with optimized operations.

When low accuracy is sufficient, total reuse of the previous subspaces (the TRKS algorithm) appears to lead to satisfactory results. When high accuracy is required, the subspaces are too unstable, which causes the dimension of the TRKS augmentation space to soar. Therefore, we proposed to retain only the part of the subspace generated by the Ritz vectors associated with converged Ritz values of the preconditioned operator (the SRKS algorithm). These vectors can be built very inexpensively. Such an augmentation was found to remain stable throughout the linear systems and to lead to a reduction in the number of iterations which is consistent with the theory. In terms of computation time, the proposed method leads to a variable, but always positive, gain compared to non-augmented systems. We observed CPU time improvements of 20\% to 50\%, and wall clock time improvements of 40\% to 70\%.

Up until now, our attempts to improve the selection algorithm by eliminating the converged values in the central part of the spectrum have not led to impressive results. This probably means that the Ritz vectors associated with converged Ritz values contain meaningful information which cannot be removed from the analysis of the current system. A continuation of this work could consist in a better analysis of the accumulation of the augmentation vectors. This can be done by studying the coarse matrix $\const^T\op\const$ whose distance to the identity matrix characterizes the variation of the Krylov subspaces. Another objective would be to port some of the ideas presented in this paper to nonsymmetric solvers.

\bibliographystyle{plain}
\bibliography{krylov}

\begin{thebibliography}{10}

\bibitem{Aliaga00alanczos-type}
J.~I. Aliaga, D.~L. Boley, R.~W. Freund, and V.~Hern\'andez.
\newblock A {L}anczos-type method for multiple starting vectors.
\newblock {\em Mathematics of Computation}, 69:1577--1601, 2000.

\bibitem{AXELSSON:1986:RCPCG}
O.~Axelsson and G.~Lindskog.
\newblock On the rate of convergence of the preconditioned conjugate gradient
  method.
\newblock {\em Numerische Mathematik}, 48:499--523, 1986.

\bibitem{baker:1608}
A.~H. Baker, J.~M. Dennis, and E.~R. Jessup.
\newblock On improving linear solver performance: A block variant of {GMR}es.
\newblock {\em SIAM Journal on Scientific Computing}, 27(5):1608--1626, 2006.

\bibitem{CARPRAUX:1994:SKB}
J.F. Carpraux, S.~Godunov, and S.~Kuznetsov.
\newblock Stability of the {K}rylov bases and subspaces.
\newblock Technical Report 2296, INRIA, 1994.

\bibitem{CHAPMAN:1996:DAK}
A.~Chapman and Y.~Saad.
\newblock Deflated and augmented {K}rylov subspace techniques.
\newblock {\em Numerical Linear Algebra with Applications}, 4(1):43--66, 1997.

\bibitem{DOSTAL:1988:CGM}
Z.~Dostal.
\newblock Conjugate gradient method with preconditioning by projector.
\newblock {\em International Journal of Computer Mathematics}, 23:315--323,
  1988.

\bibitem{ERHEL:1995:RGM}
J.~Erhel, K.~Burrage, and B.~Pohl.
\newblock Restarted {GMR}es preconditioned by deflation.
\newblock {\em Journal of Computational and Applied Mathematics}, 69:303--318,
  1996.

\bibitem{ERHEL:2000:ACG}
J.~Erhel and F.~Guyomarc'h.
\newblock An augmented conjugate gradient method for solving consecutive
  symmetric positive definite linear systems.
\newblock {\em SIAM Journal on Matrix Analysis and Applications},
  21(4):1279--1299, 2000.

\bibitem{FARHAT:1996:COR2}
C.~Farhat, P.-S. Chen, and F.-X. Roux.
\newblock The two-level {FETI} method - part {II}: Extension to shell problems.
  parallel implementation and performance results.
\newblock {\em Computer Methods in Applied Mechanics and Engineering},
  155:153--180, 1998.

\bibitem{FARHAT.1994.ESBI}
C.~Farhat, L.~Crivelli, and F.~X. Roux.
\newblock Extending substructure based iterative solvers to multiple load and
  repeated analyses.
\newblock {\em Computer Methods in Applied Mechanics and Engineering},
  117:195--209, 1994.

\bibitem{FARHAT:1996:COR1}
C.~Farhat and J.~Mandel.
\newblock The two-level {FETI} method for static and dynamic plate problems -
  part {I}: An optimal iterative solver for biharmonic systems.
\newblock {\em Computer Methods in Applied Mechanics and Engineering},
  155:129--152, 1998.

\bibitem{FARHAT:2000:FETINL}
C.~Farhat, K.~Pierson, and M.~Lesoinne.
\newblock The second generation {FETI} methods and their application to the
  parallel solution of large-scale linear and geometrically non-linear
  structural analysis problems.
\newblock {\em Computer Methods in Applied Mechanics and Engineering},
  184(2-4):333--374, 2000.

\bibitem{FARHAT:1994:ADV}
C.~Farhat and F.~X. Roux.
\newblock Implicit parallel processing in structural mechanics.
\newblock {\em Computational Mechanics Advances}, 2(1):1--124, 1994.
\newblock North-Holland.

\bibitem{Freund.2003.1}
R.~Freund.
\newblock Model reduction methods based on {K}rylov subspaces.
\newblock {\em Acta Numerica}, 12:267--319, 2003.

\bibitem{Freund2000395}
Roland Freund.
\newblock Krylov-subspace methods for reduced-order modeling in circuit
  simulation.
\newblock {\em Journal of Computational and Applied Mathematics}, 123(1-2):395
  -- 421, 2000.

\bibitem{GOSSELET:2002:SRK}
P.~Gosselet and C.~Rey.
\newblock On a selective reuse of {K}rylov subspaces in {N}ewton-{K}rylov
  approaches for nonlinear elasticity.
\newblock In {\em Proceedings of the $14^{th}$ conference on domain
  decomposition methods}, pages 419--426, Cocoyoc, Mexico, 2002.

\bibitem{GOSSELET.2007.1}
P.~Gosselet and C.~Rey.
\newblock Non-overlapping domain decomposition methods in structural mechanics.
\newblock {\em Archives of computational methods in engineering},
  13(4):515--572, 2007.

\bibitem{GOSSELET:2002:DDM}
P.~Gosselet, C.~Rey, P.~Dasset, and F.~L\'en\'e.
\newblock A domain decomposition method for quasi incompressible formulations
  with discontinuous pressure field.
\newblock {\em Revue europ\'eenne des \'elements finis}, 11:363--377, 2002.

\bibitem{GOSSELET:2003:IEI}
P.~Gosselet, C.~Rey, and D.~Rixen.
\newblock On the initial estimate of interface forces in {FETI} methods.
\newblock {\em Computer Methods in Applied Mechanics and Engineering},
  192:2749--2764, 2003.

\bibitem{GOSSELET.2009.1}
P.~Gosselet, D.~Rixen, and C.~Rey.
\newblock A domain decomposition strategy to efficiently solve structures
  containing repeated patterns.
\newblock {\em International Journal for Numerical Methods in Engineering},
  78(7):828--842, 2009.

\bibitem{heres.2007.1}
P.J. Heres, D.~Deschrijver, W.H.A. Schilders, and T.~Dhaene.
\newblock Combining {K}rylov subspace methods and identification-based methods
  for model order reduction.
\newblock {\em International Journal of Numerical Modelling : Electronic
  Networks, Devices and Fields}, 20(6):271--282, 2007.

\bibitem{JIA:2004:CHRV}
Z.~Jia.
\newblock The convergence of harmonic {R}itz values, harmonic {R}itz vectors
  and refined harmonic {R}itz vectors.
\newblock {\em Mathematics of computation.}, 74(251):1441--1456, 2004.

\bibitem{METIS:1998}
G.~Karypis and V.~Kumar.
\newblock A fast and high quality multilevel scheme for partitioning irregular
  graphs.
\newblock {\em SIAM Journal on Scientific Computing}, 20(1):359--392, 1998.

\bibitem{kilmer2006}
M.~E. Kilmer and E.~de~{S}turler.
\newblock Recycling subspace information for diffuse optical tomography.
\newblock {\em SIAM Journal on Scientific Computing}, 27(6):2140--2166, 2006.

\bibitem{KLAWONN:2001:FNN}
A.~Klawonn and O.B. Widlund.
\newblock {FETI} and {N}eumann-{N}eumann iterative substructuring methods:
  connections and new results.
\newblock {\em Communications on Pure and Applied Mathematics}, LIV:0057--0090,
  2001.

\bibitem{LETALLEC:1997:SHELL}
P.~Le~{T}allec, J.~Mandel, and M.~Vidrascu.
\newblock A {N}eumann-{N}eumann domain decomposition algorithm for solving
  plate and shell problems.
\newblock {\em SIAM Journal on Numerical Analysis}, 35(2):836--867, April 1998.

\bibitem{lene:2001}
F.~L\'en\'e and C.~Rey.
\newblock Some strategies to compute elastomeric lamified composite structures.
\newblock {\em Composite structures}, 54:231--241, 2001.

\bibitem{LINGEN:2000:EGS}
F.~J. Lingen.
\newblock Efficient {G}ram-{S}chmidt orthonormalisation on parallel computers.
\newblock {\em Communication in Numerical Methods in Engineering}, 16:57--66,
  2000.

\bibitem{MANDEL:1993:BAL}
J.~Mandel.
\newblock Balancing domain decomposition.
\newblock {\em Communication in Numerical Methods in Engineering}, 9:233--241,
  1993.

\bibitem{MANDEL:1996:COEF}
J.~Mandel and M.~Brezina.
\newblock Balancing domain decomposition for problems with large jumps in
  coefficients.
\newblock {\em Mathematics of Computation}, 65(216):1387--1401, 1996.

\bibitem{meyer.2003.1}
M.~Meyer and H.~G. Matthies.
\newblock Efficient model reduction in non-linear dynamics using the
  karhunen-loeve expansion and dual-weighted-residual methods.
\newblock {\em Computational Mechanics}, 31(1-2):179--191, 2003.

\bibitem{MOLINARI:2001:MTCD}
N.~Molinari, C.~Bonaldi, and J.P. Daur\`es.
\newblock Multiple temporal cluster detection.
\newblock {\em Biometrics}, 57:577--583, 2001.

\bibitem{MORGAN:1995:Eigen}
R.~B. Morgan.
\newblock A restarted {GMR}es method augmented with eigenvectors.
\newblock {\em SIAM Journal on Matrix Analysis and Applications},
  16:1154--1171, 1995.

\bibitem{ZEBUUSER:2001}
Northwest Numerics.
\newblock {\em Z-set user manual}, 2001.

\bibitem{NOTAY:1993:CRCGRE}
Y.~Notay.
\newblock On the convergence rate of the conjugate gradients in presence of
  rounding errors.
\newblock {\em Numerische Mathematik}, 65:301--317, 1993.

\bibitem{Nouy.2007.1}
A.~Nouy.
\newblock A generalized spectral decomposition technique to solve a class of
  linear stochastic partial differential equations.
\newblock {\em Computer Methods in Applied Mechanics and Engineering},
  196(45-48):4521--4537, 2007.

\bibitem{PAIGE:1995:ASE}
C.~Paige.
\newblock Approximate solutions and eigenvalue bounds from {K}rylov subspaces.
\newblock {\em Numerical Linear Algebra with Applications}, 2(2):115--133,
  1995.

\bibitem{PARKS:2006:RKS}
M.~Parks, E.~De~Sturler, G.~Mackey, D.~Johnson, and S.~Maiti.
\newblock Recycling {K}rylov subspaces for sequences of linear systems.
\newblock {\em SIAM Journal on Scientific Computing}, 28(5):1651--1674, 2006.

\bibitem{Rey:1998:RRP}
C.~Rey and F.~Risler.
\newblock A {R}ayleigh-{R}itz preconditioner for the iterative solution to
  large scale nonlinear problems.
\newblock {\em Numerical Algorithms}, 17:279--311, 1998.

\bibitem{REY:1996:TAR}
Christian Rey.
\newblock An acceleration technique for the solution of non-linear elasticity
  problems by domain decomposition.
\newblock {\em Comptes Rendus de l'Acad\'emie des Sciences, Paris, s\'erie
  IIb}, 322(8):601--606, 1996.

\bibitem{Risler:2000:IAA}
Franck Risler and Christian Rey.
\newblock Iterative accelerating algorithms with {K}rylov subspaces for the
  solution to large-scale nonlinear problems.
\newblock {\em Numerical Algorithms}, 23:1--30, 2000.

\bibitem{Ryckelynck.2006.1}
D.~Ryckelynck, F.~Chinesta, E.~Cueto, and A.~Ammar.
\newblock On the "a priori" model reduction: Overview and recent developments.
\newblock {\em Archives of Computational Methods in Engineering}, 13:91--128,
  2006.

\bibitem{SAAD:1997:AAK}
Y.~Saad.
\newblock Analysis of augmented {K}rylov subspace methods.
\newblock {\em SIAM Journal on Matrix Analysis and Applications},
  18(2):435--449, April 1997.

\bibitem{SAAD:2000:IMS}
Y.~Saad.
\newblock {\em Iterative methods for sparse linear systems}.
\newblock SIAM, Philadelphia, USA, 2nd edition, 2003.

\bibitem{SAAD:1992:NMLEP}
Y.~Saad.
\newblock {\em Numerical Methods for Large Eigenvalue Problems}, volume~66 of
  {\em Classics in Applied Mathematics}.
\newblock SIAM, Philadelphia, USA, revised edition, 2011.

\bibitem{SAAD:2000:DCG}
Y.~Saad, M.~Yeung, J.~Erhel, and F.~Guyomarc'h.
\newblock A deflated version of the conjugate gradient algorithm.
\newblock {\em SIAM Journal on Scientific Computing}, 21(5):1909--1926, 2000.

\bibitem{SIMONCINI:2007:RCD}
V.~Simoncini and D.~Szyld.
\newblock Recent computational developments in {K}rylov subspace methods for
  linear systems.
\newblock {\em Numerical Linear Algebra with Applications}, 14(1):1--59, 2007.

\bibitem{TANG:2007:TNC}
J.M. Tang, R.~Nabben, C.~Vuik, and Y.A. Erlangga.
\newblock Theoretical and numerical comparison of various projection methods
  derived from deflation, domain decomposition and multigrid methods.
\newblock Reports of the Department of Applied Mathematical Analysis 07-04,
  Delft university of technology, 2007.

\bibitem{VANDERSLUIS:1986:RCCG}
A.~van~der Sluis and H.~van~der Vorst.
\newblock The rate of convergence of conjugate gradients.
\newblock {\em Numerische Mathematik}, 48:543--560, 1986.

\bibitem{WANG:2007:LST}
S.~Wang, E.~De~Sturler, and G.~Paulino.
\newblock Large-scale topology optimization using preconditioner {K}rylov
  subspace methods with recycling.
\newblock {\em International Journal for Numerical Methods in Engineering},
  69(12):2441--2468, 2007.

\bibitem{JIA:2004:CRV}
Jia Z. and G.W. Stewart.
\newblock On the convergence of the {R}itz values, {R}itz vectors and refined
  {R}itz vectors.
\newblock Technical Report 3896, Institute of Advanced Computer Studies,
  Department of Computer Science, University of Maryland at College Park, 1999.

\end{thebibliography}

\end{document}